\numberwithin{equation}{section}
\def \vec#1{{\bf{#1}}}
\def\hlinewd#1{%
\noalign{\ifnum0=`}\fi\hrule \@height #1 %
\futurelet\reserved@a\@xhline}
\newcommand{\bi}{\begin{itemize}}
\newcommand{\ei}{\end{itemize}}
\newcommand{\diverg}{\vec{\nabla}\cdot}
\newcommand{\director}{\vec{n}}
\newcommand{\curl}{\vec{\nabla}\times}
\newcommand{\Ltwoinner}[3]{\langle #1,#2 \rangle_0}
\newcommand{\Ltwonorm}[2]{\Vert #1 \Vert_0}
\newcommand{\Ltwoinnerndim}[4]{\langle #1,#2 \rangle_0}
\newcommand{\Rone}{\mathbb{R}}
\newcommand{\diff}[1]{\, d#1}
\newcommand{\Hone}[1]{H^1(#1)}
\newcommand{\Honenot}[1]{H^1_0({#1})}
\newcommand{\Ltwo}[1]{L^2(#1)}
\newcommand{\Lp}[1]{L^p (\Omega)}
\newcommand{\Linfinity}[1]{L^{\infty}(\Omega)}
\newcommand{\triangulation}{\mathcal{T}_h}
\newcommand{\diam}{\text{diam }}
\newcounter{casenum}
\title{Error Estimators and Marking Strategies for Electrically Coupled Liquid Crystal Systems\thanks{Revised \today}}
\author{D. B. Emerson\thanks{Department of Mathematics, Tufts University, Medford, MA 02155 (david.emerson@tufts.edu).}}
\begin{document}

\maketitle

\begin{abstract}
This paper derives a posteriori error estimators for the nonlinear first-order optimality conditions associated with the electrically and flexoelectrically coupled Frank-Oseen model of liquid crystals, building on the results of \cite{Emerson6} for elastic systems. Estimators are proposed for both Lagrangian and penalty approaches to imposing the unit-length constraint required by the model. Moreover, theory is proven establishing the penalty method estimator as a reliable estimate of global approximation error and an efficient measure of local error, suitable for use in adaptive refinement. Numerical experiments conducted herein demonstrate significant improvements in both accuracy and efficiency with adaptive refinement guided by the proposed estimators for both constraint formulations. The numerical results also extend the simulations of \cite{Emerson6} to include systems with known analytical solutions, confirming the theoretical results and enabling performance comparisons for a selection of established marking strategies. In each case, the adapted grids successfully yield substantial reductions in computational work, comparable or better physical properties, and deliver more uniformly distributed error.
\end{abstract}

\begin{keywords}
liquid crystal simulation, coupled systems, a posteriori error estimators, adaptive mesh refinement
\end{keywords}

\begin{AMS}
76A15, 65N30, 49M15, 65N22, 65N55
\end{AMS}
 
\pagestyle{myheadings}
\thispagestyle{plain}
\markboth{\sc Emerson}{\sc AMR for Coupled Liquid Crystals Systems}

%\linenumbers

\section{Introduction}

As materials possessing mesophases with characteristics of both liquids and organized solids, liquid crystals exhibit many interesting physical properties inspiring extensive study and a wide range of applications. In addition to considerable use in modern display technologies, liquid crystals are used for nanoparticle organization \cite{Lagerwall1},  manufacture of structured nanoporous solids \cite{Wan1}, and efficient conversion of mechanical strain to electrical energy \cite{Harden1}, among many others.

The focus of this paper is nematic liquid crystals, which are rod-like molecules with long-range orientational order described by a vector field $\director(x, y, z) = (n_1, n_2, n_3)^T$, know as the director. For the model considered here, $\director$ is constrained to unit-length pointwise throughout the domain, $\Omega$. In addition to their elastic properties, liquid crystals are dielectrically active such that their structures are affected by the presence of electric fields. In addition, certain types of liquid crystals demonstrate flexoelectric coupling wherein deformations of the director produce internally generated electric fields \cite{Meyer1}. Thorough treatments of liquid crystal physics are found in \cite{Stewart1, Virga1}.

With the combination of highly coupled physical phenomena and complicated experimental behavior, numerical simulations of liquid crystal structures are fundamental to the study of novel physical phenomena \cite{Emerson5, Clerc1, RojasGomez1}, optimization of device design, and analysis of experimental observations. As many applications and experiments require simulations on two-dimensional (2D) and three-dimensional (3D) domains with complicated boundary conditions, the development of highly efficient and accurate numerical approaches is of significant importance. Effective a posteriori error estimators significantly increase the efficiency of numerical methods for partial differential equations (PDEs) and variational systems by guiding the construction of optimal discretizations via adaptive refinement. A wealth of research exists for the design and theoretical support of effective error estimators in the context of finite-element methods. This includes techniques treating both linear and nonlinear PDEs across a number of applications \cite{John4, Oden1, Verfurth3, Bank1, Babuska2}.

In \cite{Emerson6}, a reliable a posterior error estimator was developed for the first-order optimality conditions arising from minimization of the Frank-Oseen elastic free-energy model. Using the estimator to guide adaptive mesh refinement (AMR) in numerical simulations produced competitive solutions in terms of constraint conformance and free energy with considerably less computational work. However, no analytical error studies were performed at that time to confirm the theoretical bounds or definitively quantify efficiency gains. As such, the goal of this work is two-fold. First, we investigate the performance of the elastic error estimator for problems with known analytical solutions on both 2D and 3D domains, verifying the reliability theory of \cite{Emerson6}. Additionally, the known solutions enable a comparison of different marking strategies within the AMR framework, including techniques proposed in \cite{Dorfler1, Gui1, Gui2, Gui3}, which indicate that a well-chosen marking scheme yields even better efficiency. Second, we extend the elastic error estimator to consider systems with electric and flexoelectric coupling. The proposed, coupled, a posteriori error estimator is shown to be a reliable estimate of global approximation error and an efficient indicator of local error. Numerical experiments with both external and flexoelectrically induced electric fields demonstrate the performance of the estimator compared with uniform refinement.

This paper is organized as follows. In Section \ref{model}, the coupled Frank-Oseen free-energy model and associated variational systems for the first-order optimality conditions are introduced. Additional notation and prerequisite theoretical results to be applied in the reliability and efficiency proofs are discussed in Section \ref{preliminaries}. In Section \ref{theory}, the coupled error estimators are derived for both the penalty and Lagrangian formulations of the variational systems. In addition, proofs of reliability and efficiency for the penalty method estimator are constructed. The applied marking strategies are discussed in Section \ref{numerics}, and a set of numerical experiments is presented investigating the performance of the elastic and coupled error estimators. Finally, Section \ref{conclusions} provides some concluding remarks and a discussion of future work.

\section{Free-Energy Model and Optimality Conditions} \label{model}

Liquid crystals are simulated using a number of different models \cite{Davis1, Onsager1, Gartland1}. Here, we consider the Frank-Oseen free-energy model where, for a domain $\Omega$, the coupled equilibrium free energy is composed of three main components associated with elastic deformations, external electric fields, and flexoelectrically generated fields. Let $K_i \geq 0$, $i=1,2, 3$ be Frank constants. Assuming that each $K_i \neq 0$, define the tensor $\vec{Z} = \vec{I} - (1-\kappa) \director \otimes \director$, where $\kappa = K_2/K_3$. The Frank constants depend on the physical characteristics of the liquid crystal and have a significant impact on orientational structure \cite{Atherton2, Lee1}. 

We denote the classical $\Ltwo{\Omega}$ inner product and norm as $\Ltwoinner{\cdot}{\cdot}{\Omega}$ and $\Vert \cdot \Vert_0$, respectively, for both scalar and vector quantities. The coupled free-energy functional is then written
\begin{align}
\mathcal{G}(\director, \phi) &= \frac{1}{2}K_1 \Ltwonorm{\diverg \director}{\Omega}^2 + \frac{1}{2}K_3\Ltwoinnerndim{\vec{Z} \curl \director}{\curl \director}{\Omega}{3}  - \frac{1}{2}\epsilon_0\epsilon_{\perp}\Ltwoinnerndim{\nabla \phi}{\nabla \phi}{\Omega}{3} \nonumber \\
&\hspace{0.35cm} - \frac{1}{2}\epsilon_0 \epsilon_a \Ltwoinner{\director \cdot \nabla \phi}{\director \cdot \nabla \phi}{\Omega} + e_s \Ltwoinner{\diverg \director}{\director \cdot \nabla \phi}{\Omega} + e_b\Ltwoinnerndim{\director \times \curl \director}{\nabla \phi}{\Omega}{3}. \label{flexofunctional}
\end{align}
For a full derivation of the functional in \eqref{flexofunctional}, see \cite{Emerson2, Emerson5}. Throughout this paper, we assume the presence of Dirichlet boundary conditions, therefore the functional has been simplified using the null Lagrangian discussed in \cite{Stewart1}. Moreover, the free-energy expression has been non-dimensionalized using the approach detailed in \cite{Emerson3}.

The variable $\phi$ in \eqref{flexofunctional} denotes the electric potential and $\epsilon_0 > 0$ is the permittivity of free space. The dielectric anisotropy of the liquid crystal is $\epsilon_a = \epsilon_{\parallel} - \epsilon_{\perp}$, with the constants $\epsilon_{\parallel}, \epsilon_{\perp} > 0$ representing the parallel and perpendicular dielectric permittivity, respectively. For positive $\epsilon_a$, the director prefers parallel alignment with the electric field, while negative anisotropy indicates a perpendicular preference. Finally, $e_s$ and $e_b$ are material constants specifying the liquid crystal's flexoelectric response. Liquid crystal equilibrium states correspond to configurations that minimize the functional in \eqref{flexofunctional} subject to the local unit-length constraint, $\director \cdot \director -1$, on $\Omega$. Additionally, the relevant Maxwell's equations for a static electric field, $\diverg \vec{D} = 0$ and $\curl \vec{E} = \vec{0}$, known as Gauss' and Faraday's laws, respectively, must be satisfied. For this system, 
\begin{align} \label{ElectricDisplacement}
\vec{D} = -\epsilon_0 \epsilon_{\perp} \nabla \phi - \epsilon_0 \epsilon_a (\director \cdot \nabla \phi) \director + e_s \director (\diverg \director) + e_b (\director \times \curl \director).
\end{align}
Note that the use of an electric potential implies that Faraday's law is trivially satisfied, and it is straightforward to show that a minimizing pair, $(\director_*, \phi_*)$, adhering to the unit-length constraint, satisfies Gauss' law in weak form.

Throughout this paper, it is assumed that $\director \in H_{\vec{g}_1}^1(\Omega)^3 = \{ \vec{v} \in H^1(\Omega)^3 : \vec{v} = \vec{g}_1 \text{ on } \partial \Omega \}$ and $\phi \in H_{g_2}^1(\Omega) = \{ \psi \in H^1(\Omega) : \psi = g_2 \text{ on } \partial \Omega \}$, where $H^1(\Omega)$ denotes the classical Sobolev space with norm $\Vert \cdot \Vert_1$. The boundary functions $\vec{g}_1$ and $g_2$ are assumed to satisfy appropriate compatibility conditions for the domain. Note that if $\vec{g}_1 = \vec{0}$, the space $H^1_{\vec{g}_1}(\Omega)^3 = H^1_0(\Omega)^3$.

In order to enforce the pointwise unit-length constraint, we consider the penalty and Lagrange multiplier approaches studied in \cite{Emerson2, Emerson3}. The penalty method adds a weighted, positive term to the free-energy functional, penalizing deviation from the constraint such that for $\zeta > 0$
\begin{align*}
\mathcal{H}(\director, \phi) = \mathcal{G}(\director, \phi) + \frac{1}{2}\zeta \Ltwoinner{\director \cdot \director -1}{\director \cdot \director -1}{\Omega}.
\end{align*}
Taking the first variation of $\mathcal{H}(\director, \phi)$, the first-order optimality conditions are written
\begin{align}
\mathcal{P}(\director, \phi) = \mathcal{C}(\director, \phi) + 2\zeta \Ltwoinner{\vec{v} \cdot \director}{\director \cdot \director -1}{\Omega}  = 0 && \forall (\vec{v}, \psi) \in H^1_0(\Omega)^3 \times H^1_0(\Omega), \label{PenaltyFOOC}
\end{align}
where
\begin{align*}
&\mathcal{C}(\director, \phi) = K_1\Ltwoinner{\diverg \director}{\diverg \vec{v}}{\Omega} + K_3\Ltwoinnerndim{\vec{Z} \curl \director}{\curl \vec{v}}{\Omega}{3} - \epsilon_0 \epsilon_a \Ltwoinner{\director \cdot \nabla \phi}{\vec{v} \cdot \nabla \phi}{\Omega} \nonumber \\
& \qquad + (K_2-K_3)\Ltwoinner{\director \cdot \curl \director}{\vec{v} \cdot \curl \director}{\Omega}  - \epsilon_0 \epsilon_{\perp}\Ltwoinnerndim{\nabla \phi}{\nabla \psi}{\Omega}{3} - \epsilon_0 \epsilon_a \Ltwoinner{\director \cdot \nabla \phi}{\director \cdot \nabla \psi}{\Omega} \nonumber \\
& \qquad + e_s\big( \Ltwoinner{\diverg \director}{\vec{v} \cdot \nabla \phi}{\Omega} + \Ltwoinner{\diverg \vec{v}}{\director \cdot \nabla \phi}{\Omega} \big)  + e_b\big( \Ltwoinnerndim{\director \times \curl \vec{v}}{\nabla \phi}{\Omega}{3} \nonumber \\
&\qquad + \Ltwoinnerndim{\vec{v} \times \curl \director}{\nabla \phi}{\Omega}{3} \big) + e_s \Ltwoinner{\diverg \director}{\director \cdot \nabla \psi}{\Omega} +e_b  \Ltwoinnerndim{\director \times \curl \director}{\nabla \psi}{\Omega}{3}.
\end{align*}
Alternatively, the Lagrange multiplier approach uses a non-dimensionalized Lagrange multiplier to form the Lagrangian
\begin{align*}
\mathcal{L}(\director, \phi, \lambda) = \mathcal{G}(\director, \phi) + \frac{1}{2} \int_{\Omega} \lambda(\vec{x})((\director \cdot \director) - 1).
\end{align*}
The corresponding first-order optimality conditions are
\begin{align}
\mathcal{F}(\director, \phi, \lambda) &= \mathcal{C}(\director, \phi) + \int_{\Omega} \lambda (\director \cdot \vec{v}) \diff{V} + \int_{\Omega} \gamma ((\director \cdot \director) - 1) \diff{V} = 0 \label{lagrangeFOOC}
\end{align}
for all $(\vec{v}, \psi, \gamma) \in \Honenot{\Omega}^3 \times \Honenot{\Omega} \times \Ltwo{\Omega}$, where the constant coefficient of the last term has been absorbed into $\gamma$.

In \cite{Emerson6}, a posteriori error estimators were proposed for the first-order optimality conditions of purely elastic liquid crystal systems. Below, we extend those existing estimators to include electric and flexoelectric coupling for both constraint enforcement techniques. Moreover, we show that the penalty method estimator for the coupled systems is both reliable and locally efficient. While reliability and efficiency theory for the Lagrangian formulation remains under development, numerical results show that estimators for both constraint formulations perform well in practice.

\section{Preliminary Theory and Notation} \label{preliminaries}

In this section, some additional notation and requisite theoretical results used in subsequent sections are discussed. For the theory to follow, it is assumed that the domain $\Omega$ is open and connected, with a polyhedral boundary. For any open subset $\omega \subset \Omega$ with Lipschitz boundary, norms restricted to the subdomain are denoted with an index as $\Vert \cdot \Vert_{1, \omega}$ and $\Vert \cdot \Vert_{0, \omega}$. Let $\{\triangulation\}$, $0 < h \leq 1,$ be a quasi-uniform family of meshes subdividing $\Omega$ and satisfying the conditions
\begin{align}
&\max \{\diam T : T \in \triangulation\} \leq h \, \diam \Omega, \nonumber \\
&\min \{\diam B_T : T \in \triangulation\} \geq \rho h \, \diam \Omega \label{quasiuniform},
\end{align}
where $\rho > 0$ and $B_T$ is the largest ball contained in $T$ such that $T$ is star-shaped with respect to $B_T$. In addition, we assume that any triangulation satisfies the admissibility condition such that any two cells of $\triangulation$ are either disjoint or share a complete, smooth sub-manifold of their boundaries. For any $T \in \triangulation$, let $h_T = \diam T$, denote the set of edges of $T$ as $\mathcal{E}(T)$, and $h_E = \diam E$ for $E \in \mathcal{E}(T)$. It is also assumed that the mesh family is fine enough that $h_T, h_E \leq 1$. Note that the quasi-uniformity condition of \eqref{quasiuniform} ensures that the ratio $h_T/h_E$ is bounded above and below by constants independent of $h$, $T$, and $E$ and implies that the smallest angle of any $T$ is bounded from below by a constant independent of $h$ \cite{Verfurth2}.

The sets of vertices corresponding to $T$ and $E$ are written $\mathcal{N}(T)$ and $\mathcal{N}(E)$, respectively. The set of all edges for $\triangulation$ is written $\mathcal{E}_h = \bigcup_{T \in \triangulation} \mathcal{E}(T)$, and $\mathcal{E}_{h, \Omega}$ signifies the subset of interior edges. Finally, some specific subdomains of $\Omega$ are written
\begin{align*}
\omega_T &= \bigcup_{\mathcal{E}(T) \cap \mathcal{E}(T') \neq \emptyset} T', & \omega_E &= \bigcup_{E \in \mathcal{E}(T')} T',\\
\tilde{\omega}_T &= \bigcup_{\mathcal{N}(T) \cap \mathcal{N}(T') \neq \emptyset} T', & \tilde{\omega}_E &= \bigcup_{\mathcal{N}(E) \cap \mathcal{N}(T') \neq \emptyset} T'.
\end{align*}
For the triangulations, define a fixed reference element $\hat{T}$ and reference edge $\hat{E}$ as $\hat{T} = \{\hat{x} \in \mathbb{R}^n : \sum_{i = 1}^n \hat{x}_i \leq 1, \hat{x}_j \geq 0, 1 \leq j \leq n \}$ and $\hat{E} = \hat{T} \cap \{ \hat{x} \in \mathbb{R}^n : \hat{x}_n = 0 \}$. The triangulation is assumed to be affine equivalent such that, for any $T \in \triangulation$, there exists an invertible affine mapping from the reference components to $T$. For any $E \in \mathcal{E}_h$, we assign a unit normal vector $\eta_E$ coinciding with the outward normal for $E$ on the boundary. Then, for any piecewise continuous function $\psi$, the jump across $E$ in the direction $\eta_E$ is denoted as $[\psi]_E$. Finally, for $k \in \mathbb{N}$, define the finite-dimensional space 
\begin{align*}
S_h^{k, 0} &= \{\psi: \Omega \rightarrow \Rone : \psi \vert_T \in \Pi_k,  \forall T \in \triangulation \} \cap C(\bar{\Omega})
\end{align*}
where $\Pi_k$ is the set of polynomials of degree at most $k$, $\psi \vert_T$ is the restriction of $\psi$ to the element $T$, and $C(\bar{\Omega})$ is the collection of continuous functions on the closure of $\Omega$.

Making use of the notation and assumptions established above, a collection of important supporting theoretical results is gathered in this section and referenced in the efficiency and reliability theory developed in Section \ref{theory}. Let $I_h: L^1(\Omega) \rightarrow S_h^{1, 0}$ denote the Cl\'{e}ment interpolation operator \cite{Clement1, Verfurth1}. Then the following approximation error bound holds for $\triangulation$.

\begin{lemma} \label{clementlemma}
For any $T \in \triangulation$ and $E \in \mathcal{E}_h$
\begin{align*}
\Vert \psi - I_h \psi \Vert_{0, T} &\leq C_1 h_T \Vert \psi \Vert_{1, \tilde{\omega}_T} & \forall \psi \in \Hone{\tilde{\omega}_T}, \\
\Vert \psi - I_h \psi \Vert_{0, E} &\leq C_2 h_E^{1/2} \Vert \psi \Vert_{1, \tilde{\omega}_E} & \forall \psi \in \Hone{\tilde{\omega}_E},
\end{align*}
where $C_1$ and $C_2$ depend only on the quasi-uniformity condition in \eqref{quasiuniform}. 
\end{lemma}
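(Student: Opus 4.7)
The plan is to prove Lemma \ref{clementlemma} by a standard Bramble--Hilbert / scaling argument, leveraging two structural features of the Cl\'ement operator $I_h$: it acts via local averages on vertex patches (hence is continuous from $L^1$ of the patch into $\Pi_1$), and it reproduces constants on each patch. First I would reduce both estimates to the reference configuration via affine equivalence. For each $T \in \triangulation$, let $F_T: \hat T \rightarrow T$ be the invertible affine map guaranteed by the admissibility of $\triangulation$, and set $\hat\psi = \psi \circ F_T$ on $\tilde{\omega}_{\hat T}$, the union of reference cells mapped from $\tilde{\omega}_T$. The quasi-uniformity condition \eqref{quasiuniform} bounds $\Vert F_T' \Vert$ and $\Vert (F_T')^{-1} \Vert$ by $C h_T$ and $C h_T^{-1}$ respectively, so changes of variables convert integrals over $T$ (respectively $E$) into integrals over $\hat T$ (respectively $\hat E$) with controllable powers of $h_T$ and $h_E$.

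The heart of the argument is the reference estimate. Since $I_h$ reproduces constants on each vertex patch, the operator $\hat\psi \mapsto \hat\psi - \widehat{I_h \psi}$ is bounded from $\Hone{\tilde{\omega}_{\hat T}}$ into $\Ltwo{\hat T}$ and annihilates $\mathbb{R}$. Applying the Deny--Lions lemma (equivalently, a Poincar\'e-type inequality on the connected reference patch $\tilde{\omega}_{\hat T}$) yields $\Vert \hat\psi - \widehat{I_h \psi} \Vert_{0, \hat T} \leq \hat C \inf_{c \in \mathbb{R}} \Vert \hat\psi - c \Vert_{1, \tilde{\omega}_{\hat T}} \leq \hat C \Vert \nabla \hat\psi \Vert_{0, \tilde{\omega}_{\hat T}}$, where $\hat C$ depends only on the finite family of reference patch geometries compatible with $\rho$. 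Scaling back through $F_T$, the element-side $L^2$ norm picks up a factor $h_T^{n/2}$ while the gradient norm on the patch loses $h_T^{n/2 - 1}$, yielding exactly one surplus power of $h_T$. Bounding $\Vert \nabla \psi \Vert_{0, \tilde{\omega}_T} \leq \Vert \psi \Vert_{1, \tilde{\omega}_T}$ delivers the first inequality.

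For the edge estimate I would combine the same reference construction with the standard trace inequality on $\hat T$, namely $\Vert \hat v \Vert_{0, \hat E} \leq C_{\text{tr}} \Vert \hat v \Vert_{1, \hat T}$ for all $\hat v \in \Hone{\hat T}$, applied to $\hat v = \hat\psi - \widehat{I_h \psi}$. Repeating the Deny--Lions argument on the broader reference patch $\tilde{\omega}_{\hat E}$ controls the right-hand side by $\Vert \hat\psi \Vert_{1, \tilde{\omega}_{\hat E}}$. Scaling back converts the reference-edge norm into the physical-edge norm with a factor $h_E^{(n-1)/2}$, while the $H^1$ contribution on the patch scales as $h_E^{n/2 - 1}$ (using that $h_T$ and $h_E$ are comparable under \eqref{quasiuniform}), leaving exactly the target factor $h_E^{1/2}$. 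The main obstacle is the bookkeeping in the scaling step: one must verify that the reference patches $\tilde{\omega}_{\hat T}$ and $\tilde{\omega}_{\hat E}$ belong to a finite family of shapes parameterized only by the quasi-uniformity constant $\rho$, so that $\hat C$ and $C_{\text{tr}}$ are uniform in $h$, $T$, and $E$. Once affine equivalence and quasi-uniformity are correctly exploited, the two bounds follow as stated.
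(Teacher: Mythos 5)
The paper does not prove this lemma; it is quoted directly from the cited references (Cl\'ement's original paper and Verf\"urth's monograph), so there is no in-paper proof to compare against. Your sketch reproduces the standard argument from those sources correctly: local $L^1$-stability of the patch averages plus reproduction of constants, a Deny--Lions/Bramble--Hilbert step on the pulled-back patch, the reference trace inequality for the edge bound, and homogeneous scaling of the $L^2$ and seminorm quantities, which does yield the surplus factors $h_T$ and $h_E^{1/2}$ exactly as you compute. The only imprecision is the claim that the reference patches $\tilde{\omega}_{\hat T}$, $\tilde{\omega}_{\hat E}$ form a \emph{finite} family of shapes: quasi-uniformity bounds the number of elements per patch and their chunkiness, but the configurations vary continuously, so the uniformity of $\hat C$ and $C_{\text{tr}}$ should instead be argued from an explicit Poincar\'e/trace constant for star-shaped domains (or a compactness argument over the admissible patch geometries); with that adjustment the proof is complete.
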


Following the notation in \cite{Verfurth1, Verfurth2}, let $\Psi_{\hat{T}}, \Psi_{\hat{E}} \in C^{\infty}(\hat{T}, \Rone)$ be cut-off functions defined on the reference components $\hat{T}$ and $\hat{E}$ such that
\begin{align*}
&0 \leq \Psi_{\hat{T}} \leq 1, \quad \max_{\hat{x} \in \hat{T}} \Psi_{\hat{T}}(\hat{x}) = 1, \quad \Psi_{\hat{T}} = 0 \text{ on } \partial \hat{T}, \\
&0 \leq \Psi_{\hat{E}} \leq 1, \quad \max_{\hat{x} \in \hat{E}} \Psi_{\hat{E}}(\hat{x}) = 1, \quad \Psi_{\hat{E}} = 0 \text{ on } \partial \hat{T} \backslash \hat{E}.
\end{align*}
Define a continuation operator $\hat{P}: L^{\infty}(\hat{E}) \rightarrow L^{\infty}(\hat{T})$ as 
\begin{align*}
\hat{P} \hat{u}(\hat{x}_1, \ldots, \hat{x}_n) := \hat{u}(\hat{x}_1, \ldots, \hat{x}_{n-1})
\end{align*}
for all $\hat{x} \in \hat{T}$, and fix two arbitrary finite-dimensional subspaces, $V_{\hat{T}} \subset L^{\infty}(\hat{T})$ and $V_{\hat{E}} \subset L^{\infty}(\hat{E})$. Applying the affine mappings from reference components, corresponding functions, $\Psi_T$ and $\Psi_E$, operator $P: L^{\infty}(E) \rightarrow L^{\infty}(T)$, and spaces $V_T$ and $V_E$ are defined for arbitrary $T \in \triangulation$ and $E \in \mathcal{E}_h$ with analogous properties. Thus, the following lemma and corollary hold, c.f. \cite{Verfurth1, Verfurth2, Brenner1}.
\begin{lemma} \label{cutoffinequalities}
There are constants $C_1, \ldots, C_7$ depending only on the finite-dimensional spaces $V_{\hat{T}}$ and $V_{\hat{E}}$, the functions $\Psi_{\hat{T}}$ and $\Psi_{\hat{E}}$, and  the quasi-uniform bound of \eqref{quasiuniform} such that for all $T \in \triangulation$, $E \in \mathcal{E}(T)$, $u \in V_T$, and $\sigma \in V_E$
\begin{align}
C_1 \Vert u \Vert_{0, T} &\leq \sup_{v \in V_T} \frac{\int_T u \Psi_T v \diff{V}}{\Vert v \Vert_{0, T}} \leq \Vert u \Vert_{0, T}, \label{cutoff1}\\
C_2 \Vert \sigma \Vert_{0, E} &\leq \sup_{\tau \in V_E} \frac{\int_E \sigma \Psi_E \tau \diff{S}}{\Vert \tau \Vert_{0, E}} \leq \Vert \sigma \Vert_{0, E}, \label{cutoff2}\\
C_3 h_T^{-1} \Vert \Psi_T u \Vert_{0, T} &\leq \Vert \nabla (\Psi_T u) \Vert_{0, T}  \leq C_4 h_T^{-1} \Vert \Psi_T u \Vert_{0, T}, \nonumber \\
C_5 h_T^{-1} \Vert \Psi_E P\sigma \Vert_{0, T} &\leq \Vert \nabla (\Psi_E P \sigma) \Vert_{0, T}  \leq C_6 h_T^{-1} \Vert \Psi_E P \sigma \Vert_{0, T}, \nonumber \\
\Vert \Psi_E P \sigma \Vert_{0, T} &\leq C_7 h_T^{1/2} \Vert \sigma \Vert_{0, E}. \label{cutoff5}
\end{align}
\end{lemma}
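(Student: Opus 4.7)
The plan is to establish all five bounds by the standard two-step template from \cite{Verfurth1, Verfurth2}: first prove the analogous inequalities on the fixed reference element $\hat{T}$ (or reference edge $\hat{E}$) using finite-dimensional norm equivalence, then transport them to an arbitrary $T \in \triangulation$ through the affine equivalence map, tracking the resulting powers of $h_T$ via the standard Jacobian scaling laws for $L^2$-norms and gradients. The quasi-uniformity hypothesis \eqref{quasiuniform} ensures that all ratios of geometric constants (including $h_T/h_E$) are bounded above and below independently of $h$, $T$, and $E$, so the final constants $C_1,\ldots,C_7$ inherit the advertised independence.

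For the bilinear bounds \eqref{cutoff1} and \eqref{cutoff2}, I would start from the observation that $\hat{u} \mapsto \big(\int_{\hat{T}} \hat{u}^2 \Psi_{\hat{T}} \diff{V}\big)^{1/2}$ is a genuine norm on the finite-dimensional space $V_{\hat{T}}$, since $\Psi_{\hat{T}}$ is strictly positive on the interior of $\hat{T}$. By equivalence of norms in finite dimension, this norm is comparable to $\Vert \hat{u}\Vert_{0,\hat{T}}$. Choosing $v=u$ as the test function in the supremum yields the lower bound on $\hat{T}$, while Cauchy--Schwarz combined with $0\leq \Psi_{\hat{T}}\leq 1$ gives the upper bound; affine scaling then transfers \eqref{cutoff1} to $T$ without introducing $h$-dependence. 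The same reasoning applied to $\hat{E}$ with $\Psi_{\hat{E}}$ yields \eqref{cutoff2}.

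For the gradient-equivalence estimates producing $C_3,\ldots,C_6$, note that both $\Psi_{\hat{T}} V_{\hat{T}}$ and $\Psi_{\hat{E}} \hat{P} V_{\hat{E}}$ are finite-dimensional subspaces of $\Cinfty{\hat{T}}$ whose elements vanish on a nontrivial portion of $\partial \hat{T}$; hence $\Vert \nabla \cdot \Vert_{0,\hat{T}}$ and $\Vert \cdot \Vert_{0,\hat{T}}$ are equivalent on each of them (the boundary vanishing rules out nontrivial constants in the kernel of the gradient). Pulling back through the affine map introduces the familiar factor of $h_T^{-1}$ separating gradient and $L^2$ scalings on an element of diameter $h_T$, matching the form of the inequalities exactly.

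The main obstacle is \eqref{cutoff5}, where the dimensional mismatch between the $n$-dimensional integration on $T$ and the $(n-1)$-dimensional integration on $E$ must be tracked carefully. On $\hat{T}$ the inequality $\Vert \Psi_{\hat{E}} \hat{P} \hat{\sigma}\Vert_{0,\hat{T}} \leq \hat{C}\Vert \hat{\sigma}\Vert_{0,\hat{E}}$ follows again from finite-dimensional norm equivalence, now applied to the linear map $\hat{\sigma} \mapsto \Psi_{\hat{E}} \hat{P}\hat{\sigma}$ from $V_{\hat{E}}$ into $L^2(\hat{T})$. Under the affine pull-back, the left-hand norm scales like $h_T^{n/2}$ while the right-hand norm scales like $h_E^{(n-1)/2}$; invoking the quasi-uniformity bound $h_T/h_E \leq C$, the net discrepancy is precisely $h_T^{1/2}$, which gives \eqref{cutoff5}. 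Collecting the seven scaling constants obtained along the way completes the proof.
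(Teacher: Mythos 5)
Your proposal is correct and is exactly the standard reference-element argument (finite-dimensional norm equivalence on $\hat{T}$ and $\hat{E}$, followed by affine scaling with quasi-uniformity supplying the powers of $h_T$ and the comparability $h_T \sim h_E$) from Verf\"{u}rth's framework, which is precisely what the paper relies on: the lemma is stated here without proof and attributed to \cite{Verfurth1, Verfurth2, Brenner1}. The only point worth flagging is that your lower bounds in \eqref{cutoff1}--\eqref{cutoff2} use strict interior positivity of $\Psi_{\hat{T}}$ and $\Psi_{\hat{E}}$, which is slightly stronger than the properties literally listed in the paper but is the standard defining property of the cut-off (bubble) functions in the cited construction.
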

Note that with quasi-uniformity of the triangulation, after proper adjustment of $C_i$ in any of the above inequalities, the mesh constant $h_T$ may be exchanged for $h_E$ while maintaining the inequality.
\begin{corollary} \label{cutoffexpansion}
Under the assumptions of Lemma \ref{cutoffinequalities}, there exists a $\bar{C}_4 > 0$ and $\bar{C}_6 > 0$ such that
\begin{align}
\Vert \Psi_T u  \Vert_{1,T} &\leq \bar{C}_4 h_T^{-1} \Vert \Psi_T u \Vert_{0, T}, \label{expansion1} \\
\Vert \Psi_E P \sigma \Vert_{1, T} &\leq \bar{C}_6 h_T^{-1} \Vert \Psi_E P \sigma \Vert_{0, T}. \label{expansion2}
\end{align}
\end{corollary}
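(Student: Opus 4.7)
The plan is to obtain both estimates by a direct expansion of the $H^1$ norm in terms of the $L^2$ norms of the function and its gradient, combined with the upper bounds already supplied by Lemma \ref{cutoffinequalities} and the standing hypothesis that $h_T \leq 1$.

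For the first inequality, I would start from the definition $\Vert \Psi_T u \Vert_{1, T}^2 = \Vert \Psi_T u \Vert_{0, T}^2 + \Vert \nabla(\Psi_T u) \Vert_{0, T}^2$. The gradient term is bounded directly by the upper half of the third inequality in Lemma \ref{cutoffinequalities}, giving $\Vert \nabla(\Psi_T u) \Vert_{0, T}^2 \leq C_4^2 \, h_T^{-2} \Vert \Psi_T u \Vert_{0, T}^2$. The $L^2$ term itself must be absorbed into the same scaling; since $h_T \leq 1$ by assumption, we have $1 \leq h_T^{-2}$, so $\Vert \Psi_T u \Vert_{0, T}^2 \leq h_T^{-2} \Vert \Psi_T u \Vert_{0, T}^2$. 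Adding the two estimates and taking square roots yields the claim with $\bar{C}_4 = \sqrt{1 + C_4^2}$.

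The second inequality follows by exactly the same pattern, now applied to $\Psi_E P \sigma$. I would write $\Vert \Psi_E P \sigma \Vert_{1, T}^2 = \Vert \Psi_E P \sigma \Vert_{0, T}^2 + \Vert \nabla(\Psi_E P \sigma) \Vert_{0, T}^2$, invoke the upper half of the fourth inequality in Lemma \ref{cutoffinequalities} to handle the gradient, and again use $h_T \leq 1$ to absorb the $L^2$ term. This gives $\bar{C}_6 = \sqrt{1 + C_6^2}$.

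There is no substantive obstacle here; the result is essentially the observation that in the small-mesh regime the $L^2$ contribution is dominated by the scaled gradient contribution provided by the cut-off inequalities. The only thing to be a little careful about is flagging the use of $h_T \leq 1$ explicitly, so that the reader sees why the lower-order $L^2$ piece does not produce an $h_T$-independent constant that would break the $h_T^{-1}$ scaling on the right-hand side.
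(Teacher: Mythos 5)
Your argument is correct and is precisely the intended one: the paper states Corollary \ref{cutoffexpansion} without an explicit proof (deferring to the cited references), and the standard derivation is exactly what you give --- expand $\Vert \cdot \Vert_{1,T}^2$ into the $L^2$ and gradient contributions, bound the gradient via the upper halves of the inverse inequalities in Lemma \ref{cutoffinequalities}, and absorb the zeroth-order term using the standing assumption $h_T \leq 1$. Your explicit flagging of where $h_T \leq 1$ enters is exactly the right point to emphasize, since without it the constant would not carry the $h_T^{-1}$ scaling.
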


Finally, we state two key propositions from the framework developed by Verf\"{u}rth \cite{Verfurth1, Verfurth2}. Let $X$ and $Y$ be Banach spaces with norms $\Vert \cdot \Vert_X$ and $\Vert \cdot \Vert_Y$ and denote the space of continuous linear maps from $X$ to $Y$ as $\mathcal{L}(X, Y)$ with the natural operator norm $\Vert \cdot \Vert_{\mathcal{L}(X, Y)}$. The subset of linear homeomorphisms from $X$ to $Y$ is written $\text{Isom}(X, Y)$. Define $Y^* = \mathcal{L}(Y, \Rone)$ to be the dual space of $Y$, with norm $\Vert \cdot \Vert_{Y^*}$, where the associated duality pairing is written $\langle \cdot, \cdot \rangle$. Let $F \in C^1(X, Y^*)$ be a continuously differentiable function for which a solution $u \in X$ is sought such that $F(u) = 0$. Denoting the derivative of $F$ as $DF$ and a ball of radius $R > 0$ centered at $u \in X$ as $B(u, R) = \{ v \in X : \Vert u - v \Vert_X < R \}$, the first proposition is as follows.
\begin{proposition}[\hspace{-4pt} {\cite[Pg. 47]{Verfurth2}}] \label{nonlinearErrorEstimation}
Let $u_0 \in X$ be a regular solution to $F(u) = 0$ in the sense that $DF(u_0) \in \text{Isom}(X, Y^*)$. Assume that $DF$ is Lipschitz continuous at $u_0$, where there exists an $R_0 > 0$ such that
\begin{align*}
\gamma = \sup_{u \in B(u_0, R_0)} \frac{\Vert DF(u) - DF(u_0) \Vert_{\mathcal{L}(X, Y^*)}}{\Vert u - u_0 \Vert_X} < \infty.
\end{align*}
Set $R = \min \big \{ R_0, \gamma^{-1} \Vert DF(u_0)^{-1} \Vert^{-1}_{\mathcal{L}(Y^*, X)}, 2 \gamma^{-1} \Vert DF(u_0) \Vert_{\mathcal{L}(X, Y^*)} \big \}$.
Then the error estimate
\begin{align*}
\frac{1}{2} \Vert DF(u_0) \Vert_{\mathcal{L}(X, Y^*)}^{-1} \Vert F(u) \Vert_{Y^*} \leq \Vert u - u_0 \Vert_X \leq 2 \Vert DF(u_0)^{-1} \Vert_{\mathcal{L}(Y^*, X)} \Vert F(u) \Vert_{Y^*}
\end{align*}
holds for all $u \in B(u_0, R)$.
\end{proposition}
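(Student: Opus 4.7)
The plan is to derive both inequalities from a single linearization identity for $F$ about $u_0$. Since $F(u_0)=0$ and $F\in C^1(X,Y^*)$, integrating $DF$ along the segment from $u_0$ to $u$ yields
\begin{align*}
F(u) = DF(u_0)(u-u_0) + \int_0^1 \bigl[DF(u_0+t(u-u_0)) - DF(u_0)\bigr](u-u_0)\, dt.
\end{align*}
Whenever $u\in B(u_0, R_0)$, every point on the segment also lies in $B(u_0, R_0)$, so the Lipschitz hypothesis applies uniformly along the path and the remainder integral is bounded in $Y^*$ by $\tfrac{\gamma}{2}\Vert u-u_0\Vert_X^{2}$.

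For the upper bound, I would apply $DF(u_0)^{-1}$ to both sides of the identity, take $X$-norms, and insert the remainder estimate to obtain
\begin{align*}
\Vert u-u_0\Vert_X \leq \Vert DF(u_0)^{-1}\Vert_{\mathcal{L}(Y^*,X)}\Vert F(u)\Vert_{Y^*} + \tfrac{\gamma}{2}\Vert DF(u_0)^{-1}\Vert_{\mathcal{L}(Y^*,X)}\Vert u-u_0\Vert_X^{2}.
\end{align*}
The first nontrivial constraint in the definition of $R$, namely $R\leq \gamma^{-1}\Vert DF(u_0)^{-1}\Vert_{\mathcal{L}(Y^*,X)}^{-1}$, forces the coefficient multiplying $\Vert u-u_0\Vert_X$ inside the quadratic term to be at most $\tfrac{1}{2}$, so that term is absorbed into the left-hand side and the claimed upper bound follows.

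For the lower bound, I would instead take the $Y^*$-norm directly in the identity and re-use the remainder estimate to get
\begin{align*}
\Vert F(u)\Vert_{Y^*} \leq \Vert DF(u_0)\Vert_{\mathcal{L}(X,Y^*)}\Vert u-u_0\Vert_X + \tfrac{\gamma}{2}\Vert u-u_0\Vert_X^{2}.
\end{align*}
The remaining constraint $R\leq 2\gamma^{-1}\Vert DF(u_0)\Vert_{\mathcal{L}(X,Y^*)}$ ensures the quadratic term is dominated by the linear one, producing $\Vert F(u)\Vert_{Y^*}\leq 2\Vert DF(u_0)\Vert_{\mathcal{L}(X,Y^*)}\Vert u-u_0\Vert_X$, which rearranges to the desired lower estimate.

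The only delicate point is calibrating the three pieces in the definition of $R$ so that both absorption arguments succeed simultaneously with precisely the constants $\tfrac{1}{2}$ and $2$ in the statement: the $R_0$ constraint keeps the segment inside the region where the Lipschitz hypothesis is valid, while the other two are tuned exactly to the two absorptions above. No regularity beyond $C^1$ with a Lipschitz derivative is required, since all nonlinear behaviour is controlled through the single Lipschitz bound on $DF$.
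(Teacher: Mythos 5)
Your argument is correct: the linearization identity, the $\tfrac{\gamma}{2}\Vert u-u_0\Vert_X^2$ remainder bound, and the two absorption steps calibrated to the second and third terms in the definition of $R$ are exactly the standard proof of this result. Note that the paper itself offers no proof --- the proposition is quoted verbatim from Verf\"urth --- and your derivation is essentially the one given in that reference, so there is nothing to reconcile.
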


Let $X_h \subset X$ and $Y_h \subset Y$ be finite-dimensional subspaces and $F_h \in C(X_h, Y_h^*)$ be an approximation of $F$. Consider the discretized problem of finding $u_h \in X_h$ such that $F_h(u_h) = 0$.
\begin{proposition}[\hspace{-4pt} {\cite[Pg. 52]{Verfurth2}}] \label{auxiliarySpaceInequality}
Let $u_h \in X_h$ be an approximate solution to $F_h(u_h) = 0$ in the sense that $\Vert F_h(u_h) \Vert_{Y_h^*}$ is ``small.'' Assume that there is a restriction operator $R_h \in \mathcal{L}(Y, Y_h)$, a finite-dimensional space $\tilde{Y}_h \subset Y$, and an approximation $\tilde{F}_h : X_h \rightarrow Y^*$ of $F$ at $u_h$ such that 
\begin{align*}
\Vert (\text{Id}_Y - R_h)^*\tilde{F}_h(u_h) \Vert_{Y^*} \leq C_0 \Vert \tilde{F}_h (u_h) \Vert_{\tilde{Y}_h^*},
\end{align*}
where $\text{Id}_Y$ is the identity operator on $Y$, $^*$ indicates application of $(\text{Id}_Y - R_h)$ to the dual variables, and $C_0 > 0$ is independent of $h$. Then the following estimate holds.
\begin{align*}
\Vert F(u_h) \Vert_{Y^*} &\leq C_0 \Vert \tilde{F}_h (u_h) \Vert_{\tilde{Y}^*_h} + \Vert (\text{Id}_Y - R_h)^*[F(u_h) - \tilde{F}_h(u_h)] \Vert_{Y^*} \\
& \hspace{1em} + \Vert R_h \Vert_{\mathcal{L}(Y, Y_h)} \Vert F(u_h) - F_h(u_h) \Vert_{Y_h^*} + \Vert R_h \Vert_{\mathcal{L}(Y, Y_h)} \Vert F_h(u_h) \Vert_{Y_h^*}.
\end{align*}
\end{proposition}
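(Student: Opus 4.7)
The plan is to obtain the estimate by dualization. Since $\Vert F(u_h) \Vert_{Y^*} = \sup_{v \in Y, \, \Vert v \Vert_Y = 1} \langle F(u_h), v \rangle$, it suffices to bound $\langle F(u_h), v \rangle$ for an arbitrary $v \in Y$ by a sum of the four stated quantities, each scaled by $\Vert v \Vert_Y$, and then take the supremum. The decisive step is to split $v = (\text{Id}_Y - R_h)v + R_h v$, which exploits the inclusion $Y_h \subset Y$: the first summand lies in $Y$, while the second lies in $Y_h$, so it can be paired against either $F(u_h) \in Y^*$ (via the inclusion $Y_h \hookrightarrow Y$) or $F_h(u_h) \in Y_h^*$ directly.

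For the $(\text{Id}_Y - R_h)v$ piece, I would insert and subtract $\tilde{F}_h(u_h)$ to isolate the standing hypothesis, writing
\begin{align*}
\langle F(u_h), (\text{Id}_Y - R_h) v \rangle &= \langle (\text{Id}_Y - R_h)^* \tilde{F}_h(u_h), v \rangle \\
&\quad + \langle (\text{Id}_Y - R_h)^*[F(u_h) - \tilde{F}_h(u_h)], v \rangle.
\end{align*}
The assumed bound $\Vert (\text{Id}_Y - R_h)^*\tilde{F}_h(u_h) \Vert_{Y^*} \leq C_0 \Vert \tilde{F}_h(u_h) \Vert_{\tilde{Y}_h^*}$ controls the first summand, producing the first term of the claimed estimate; estimating the second summand by its $Y^*$-norm yields the second term.

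For the $R_h v$ piece, I would insert and subtract $F_h(u_h)$, viewing $R_h v$ as an element of $Y_h$, so that both $\langle F(u_h) - F_h(u_h), R_h v \rangle$ and $\langle F_h(u_h), R_h v \rangle$ become discrete pairings. Each is bounded by the corresponding $Y_h^*$-norm multiplied by $\Vert R_h v \Vert_{Y_h} \leq \Vert R_h \Vert_{\mathcal{L}(Y, Y_h)} \Vert v \Vert_Y$, producing the last two terms of the claimed bound. Assembling the four contributions and dividing through by $\Vert v \Vert_Y$ finishes the argument.

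The proof is largely bookkeeping, so the principal subtlety is not analytic but notational: the consistent interpretation of the dual pairing. In particular, $\Vert F(u_h) - F_h(u_h) \Vert_{Y_h^*}$ must be read as the $Y_h^*$-norm of the restriction of $F(u_h)$ to $Y_h$ minus $F_h(u_h) \in Y_h^*$, which is how the difference becomes a well-defined element of the discrete dual. Once this identification and the decomposition of $v$ are in place, no additional tools beyond the definitions of the operator and dual norms are required.
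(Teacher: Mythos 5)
The paper does not prove this proposition; it is quoted verbatim from Verf\"{u}rth's monograph (the citation on p.~52 of that reference), so there is no in-paper proof to compare against. Your argument is correct and is essentially the canonical proof of this result: the splitting $v = (\text{Id}_Y - R_h)v + R_h v$, the insertion of $\tilde{F}_h(u_h)$ on the first piece to invoke the standing hypothesis, and the insertion of $F_h(u_h)$ on the second piece (read as a discrete pairing against $R_h v \in Y_h$) reproduce exactly the four terms of the stated bound, and your remark about interpreting $\Vert F(u_h) - F_h(u_h)\Vert_{Y_h^*}$ via the restriction of $F(u_h)$ to $Y_h$ is the right way to make the difference well defined.
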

The first result provides an approximation error bound using the residual, while the second yields a concrete set of terms bounding the residual from above. 

\section{Reliable and Efficient Coupled Error Estimators} \label{theory}

In this section, we propose a posteriori error estimators for the first-order optimality conditions of Section \ref{model}, extending the estimators of \cite{Emerson6} to include electric and flexoelectric coupling. Furthermore, using the theory outlined in the previous section, the estimator associated with the penalty method is shown to be a reliable estimate of global approximation error and an efficient indicator of local error, suitable for use in AMR schemes. 

To begin, consider the first-order optimality conditions for the penalty method in \eqref{PenaltyFOOC}. Let $Y = X_0 = \Honenot{\Omega}^3 \times \Honenot{\Omega}$ and $X = H_{\vec{g}_1}^1(\Omega)^3 \times H_{g_2}^1(\Omega)$. Then $\mathcal{P}(\director, \phi) \in C^1(X, Y^*)$, and the Dirichlet boundary conditions imply that for a fixed $(\director, \phi) \in X$, $D\mathcal{P}(\director, \phi): X_0 \rightarrow Y^*$. In discretizing the variational system, we consider general discrete spaces 
\begin{align*}
[S_h^{1, 0}]^3 \subset V_h \subset [S_h^{s, 0}]^3, & & [S_h^{1, 0}] \subset Q_h \subset [S_h^{t, 0}],
\end{align*}
for $s, t \geq 1$ and the finite-dimensional space $Y_h = \{ (v_h, \psi_h) \in V_h \times Q_h : \vec{v}_h = \vec{0} \text{ and } \psi_h = 0 \text{ on } \Gamma \}$. For the theory presented here, we assume that the imposed boundary conditions on $(\director, \phi)$ are exactly representable on the coarsest mesh of $\{\triangulation\}$. Observe that this assumption on the boundary conditions admits projection of the boundary functions $\vec{g}_1$ and $g_2$ onto the coarsest mesh. Thus, the analysis to follow concerns estimation of the error arising in solution approximations on the interior of $\Omega$ but not from approximation of the boundary conditions. Hence, set $X_h = (V_h \times Q_h \cap X)$. Note that in the numerical results below, any boundary condition functions are interpolated with mesh refinement.

For $(\vec{v}, \psi) \in Y$ and $\langle \mathcal{P}(\director, \phi), (\vec{v}, \psi) \rangle$ define the discrete approximation
\begin{align*}
\langle \mathcal{P}_h(\director_h, \phi_h), (\vec{v}_h, \psi_h) \rangle = \langle \mathcal{P}(\director_h, \phi_h), (\vec{v}_h, \psi_h) \rangle,
\end{align*}
for $(\director_h, \phi_h) \in X_h, (\vec{v}_h, \psi_h) \in Y_h$. For the remainder of this section, assume that the pair $(\director_h, \phi_h)$ is a solution to the discrete problem
\begin{align}
\mathcal{P}_h(\director_h, \phi_h)=0 && \forall (\vec{v}_h, \psi_h) \in Y_h. \label{discretePenaltyFOOC}
\end{align}
In order to simplify notation, we define the vector and scalar quantities
\begin{align*}
\vec{p} &= -K_1 \nabla (\diverg \director_h) + K_3 \curl (\vec{Z}(\director_h) \curl \director_h) + (K_2 - K_3)(\director_h \cdot \curl \director_h) \curl \director_h \\
& \qquad  + 2 \zeta ((\director_h \cdot \director_h - 1) \director_h) - \epsilon_0 \epsilon_a ((\director_h \cdot \nabla \phi_h) \nabla \phi_h) + e_s (\diverg \director_h)\nabla \phi_h \\
& \qquad  - e_s \nabla (\director_h \cdot \nabla \phi_h)  + e_b (\curl \director_h \times \nabla \phi_h) + e_b \curl (\nabla \phi_h \times \director_h), \\
q &= \epsilon_0 \epsilon_{\perp} \Delta \phi_h + \epsilon_0 \epsilon_a \diverg ((\director_h \cdot \nabla \phi_h) \director_h) - e_s \diverg ((\diverg \director_h) \director_h) - e_b \diverg (\director_h \times \curl \director_h), \\
\hat{\vec{p}} & =  [K_1(\diverg \director_h) \eta_E + K_3 (\vec{Z}(\director_h)\curl \director_h) \times \eta_E + e_s (\director_h \cdot \nabla \phi_h) \eta_E \\
& \qquad + e_b ((\nabla \phi_h \times \director_h) \times \eta_E) ]_E,  \\
\hat{q} &=[ - \epsilon_0 \epsilon_{\perp} (\nabla \phi_h \cdot \eta_E) - \epsilon_0 \epsilon_a (\director_h \cdot \nabla \phi_h)(\director_h \cdot \eta_E) + e_s ((\diverg \director_h) \director_h) \cdot \eta_E \\
& \qquad + e_b (\director_h \times \curl \director_h) \cdot \eta_E ]_E,
\end{align*}
where $E \in \mathcal{E}_{h, \Omega}$. Integrating $\langle \mathcal{P}(\director_h, \phi_h), (\vec{v}, \psi) \rangle$ by parts elementwise for each $T \in \triangulation$, using the fact that $\vec{v}_h$ and $\psi_h$ are zero on the boundary, and gathering terms as done in \cite[Eq. 17]{Emerson6} yields
\begin{align}
\langle \mathcal{P}(\director_h, \phi_h), (\vec{v}, \psi) \rangle &= \sum_{T \in \triangulation} \int_T \vec{p} \cdot \vec{v} \diff{V} + \int_T q \cdot \psi \diff{V} \nonumber \\
& \qquad + \sum_{E \in \mathcal{E}_{h,  \Omega}}  \int_E \hat{\vec{p}} \cdot \vec{v} \diff{S} + \int_E \hat{q} \cdot \psi \diff{S}. \label{penaltyIntegrationByParts}
\end{align}
This form suggests a local estimator,
\begin{align*}
\Theta_T &= \Bigg \{ h_T^2 \left( \Vert \vec{p} \Vert_{0, T}^2 + \Vert q \Vert_{0, T}^2 \right) + \sum_{E \in \mathcal{E}(T) \cap \mathcal{E}_{h, \Omega}} h_E \left ( \Vert \hat{\vec{p}} \Vert_{0, E}^2 + \Vert \hat{q} \Vert_{0, E}^2 \right) \Bigg \}^{1/2},
\end{align*}
for any $T \in \triangulation$. Note that if no external electric field or flexoelectric coupling is present, $\Theta_T$ collapses to the elastic estimator of \cite{Emerson6}. In addition, the quantity $\Vert q \Vert_{0, T}$ locally measures the solution's conformance to the strong form of Gauss' law.

Let $R_h: Y \rightarrow Y_h$ be a restriction operator such that $R_h(\vec{u}, \varphi) = (I_h u_1, I_h u_2, I_h u_3, I_h \varphi)$ where $I_h$ is the Cl\'{e}ment operator of Lemma \ref{clementlemma}. Further, as no forcing function or Neumann boundary conditions are present, set
\begin{align*}
\langle \tilde{\mathcal{P}}_h(\director_h, \phi_h), (\vec{v}, \psi) \rangle = \langle \mathcal{P}(\director_h, \phi_h), (\vec{v}, \psi) \rangle.
\end{align*}
This trivially implies that 
\begin{align}
\Vert (\text{Id}_Y - R_h)^*[\mathcal{P}(\director_h, \phi_h) - \tilde{\mathcal{P}}_h(\director_h, \phi_h)]_{Y^*} &= 0, \label{zeroComponent1} \\
\Vert \mathcal{P}(\director_h, \phi_h) - \mathcal{P}_h(\director_h, \phi_h) \Vert_{Y^*} &= 0. \label{zeroComponent2}
\end{align}
With the above definitions, the following lemma holds.
\begin{lemma} \label{restrictionUpperBound}
There exists a constant $C > 0$ independent of $h$ such that 
\begin{align*}
\Vert (\text{Id}_Y - R_h)^*\tilde{\mathcal{P}}_h(\director_h, \phi_h) \Vert_{Y^*} \leq C \left( \sum_{T \in \triangulation} \Theta_T^2 \right)^{1/2}.
\end{align*}
\end{lemma}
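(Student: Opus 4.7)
The plan is to bound the dual norm via its variational characterization,
\begin{align*}
\Vert (\text{Id}_Y - R_h)^*\tilde{\mathcal{P}}_h(\director_h, \phi_h) \Vert_{Y^*} = \sup_{\substack{(\vec{v}, \psi) \in Y \\ (\vec{v},\psi) \neq 0}} \frac{| \langle \mathcal{P}(\director_h, \phi_h), (\vec{v} - I_h \vec{v}, \psi - I_h \psi) \rangle |}{\Vert (\vec{v},\psi) \Vert_Y},
\end{align*}
with $I_h$ acting componentwise on $\vec{v}$. Fixing an arbitrary $(\vec{v}, \psi) \in Y$ and substituting $(\vec{v} - I_h \vec{v}, \psi - I_h \psi)$ into the integration-by-parts identity \eqref{penaltyIntegrationByParts} reduces the numerator to four contributions: two volume integrals of $\vec{p}$ and $q$ against the interpolation errors of $\vec{v}$ and $\psi$, and two interior-edge integrals of $\hat{\vec{p}}$ and $\hat{q}$ against the corresponding traces.

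Next, I would apply Cauchy--Schwarz on each individual $T$ and $E$ to separate the residuals from the interpolation errors, and then invoke Lemma \ref{clementlemma} to absorb the Cl\'ement errors as $\Vert \vec{v} - I_h \vec{v} \Vert_{0, T} \leq C_1 h_T \Vert \vec{v} \Vert_{1, \tilde{\omega}_T}$ and $\Vert \vec{v} - I_h \vec{v} \Vert_{0, E} \leq C_2 h_E^{1/2} \Vert \vec{v} \Vert_{1, \tilde{\omega}_E}$, with analogous bounds for $\psi$. These weightings are precisely what is needed to match the $h_T^2$ and $h_E$ scalings appearing in the definition of $\Theta_T$. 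A discrete Cauchy--Schwarz across $T \in \triangulation$ and $E \in \mathcal{E}_{h, \Omega}$ then decouples each sum into a product of a residual factor such as $(\sum_T h_T^2 \Vert \vec{p} \Vert_{0, T}^2)^{1/2}$ or $(\sum_E h_E \Vert \hat{\vec{p}} \Vert_{0, E}^2)^{1/2}$ and a test-function factor built from sums of patchwise $H^1$ norms.

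To close the argument, the quasi-uniformity condition \eqref{quasiuniform} implies that the patches $\tilde{\omega}_T$ and $\tilde{\omega}_E$ have uniformly bounded overlap, so $\sum_T \Vert \vec{v} \Vert_{1, \tilde{\omega}_T}^2$ and $\sum_E \Vert \vec{v} \Vert_{1, \tilde{\omega}_E}^2$ are each controlled by a constant multiple of $\Vert \vec{v} \Vert_1^2 \leq \Vert (\vec{v},\psi) \Vert_Y^2$, with analogous control for $\psi$. Reorganizing the interior-edge sums via the incidence $E \in \mathcal{E}(T) \cap \mathcal{E}_{h, \Omega}$, which costs only a bounded multiplicity since each interior edge belongs to at most two elements of $\triangulation$, then recovers exactly the element-indexed grouping in the definition of $\Theta_T$ and yields the asserted bound with $C$ depending only on $C_1$, $C_2$, the quasi-uniformity constant $\rho$, and the patch-overlap constants. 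The main obstacle in writing the argument out carefully is the combinatorial bookkeeping between the edge-by-edge sums produced by Cauchy--Schwarz and the element-indexed form of $\Theta_T$; everything else reduces to routine estimates once the integration-by-parts identity, the Cl\'ement bounds, and the patch-overlap property are in hand.
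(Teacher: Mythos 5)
Your proposal is correct and follows essentially the same route as the paper's proof: the variational characterization of the dual norm applied to the integration-by-parts identity \eqref{penaltyIntegrationByParts}, elementwise and edgewise Cauchy--Schwarz combined with the Cl\'ement bounds of Lemma \ref{clementlemma}, a discrete Cauchy--Schwarz to separate the residual factor from the test-function factor, and the bounded-overlap/edge-multiplicity argument to recover $\bigl(\sum_{T} \Theta_T^2\bigr)^{1/2}$. No gaps.
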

\begin{proof}
First note that 
\begin{align}
&\Vert (\text{Id}_Y - R_h)^*\tilde{\mathcal{P}}_h(\director_h, \phi_h) \Vert_{Y^*} \nonumber \\
& = \sup_{\substack{[\vec{v}, \psi] \in Y \\ \Vert [\vec{v}, \psi] \Vert_Y = 1}} \sum_{T \in \triangulation} \sum_{i=1}^3 \int_T p_i \cdot (v_i - I_h v_i) \diff{V} + \int_T q \cdot (\psi - I_h \psi) \diff{V} \nonumber \\ 
& \hspace{5em} + \sum_{E \in \mathcal{E}_{h, \Omega}} \sum_{i = 1}^3 \int_E \hat{p}_i \cdot (v_i - I_h v_i) \diff{S} + \int_E \hat{q} \cdot (\psi - I_h \psi) \diff{S} \nonumber \\
& \leq \sup_{\substack{[\vec{v}, \psi] \in Y \\ \Vert [\vec{v}, \psi] \Vert_Y = 1}} \sum_{T \in \triangulation} \sum_{i=1}^3 C_1 h_T \Vert p_i \Vert_{0, T} \Vert v_i \Vert_{1, \tilde{\omega}_T} + C_1 h_T \Vert q \Vert_{0, T} \Vert \psi \Vert_{1, \tilde{\omega}_T} \nonumber \\
& \hspace{5em} + \sum_{E \in \mathcal{E}_{h, \Omega}} \sum_{i = 1}^3 C_2 h_E^{1/2} \Vert \hat{p}_i \Vert_{0, E} \Vert v_i \Vert_{1, \tilde{\omega}_E} + C_2 h_E^{1/2} \Vert \hat{q} \Vert_{0, E} \Vert \psi \Vert_{1, \tilde{\omega}_E}, \label{CSClementInequality}
\end{align}
where \eqref{CSClementInequality} is given by applying the Cauchy-Schwarz inequality and Lemma \ref{clementlemma} to the interpolation quantities. Using the Cauchy-Schwarz inequality for sums and letting $\tilde{C} = \max(C_1, C_2)$ implies that
\begin{align*}
& \Vert (\text{Id}_Y - R_h)^*\tilde{\mathcal{P}}_h(\director_h, \phi_h) \Vert_{Y^*} \\
&\leq \sup_{\substack{[\vec{v}, \psi] \in Y \\ \Vert [\vec{v}, \psi] \Vert_Y = 1}}  \tilde{C} \Bigg ( \sum_{T \in \triangulation} h_T^2 \left( \Vert \vec{p} \Vert_{0, T}^2 + \Vert q \Vert_{0, T}^2 \right) + \sum_{E \in \mathcal{E}_{h, \Omega}} h_E \left( \Vert \hat{\vec{p}} \Vert_{0, E}^2 + \Vert \hat{q} \Vert_{0, E}^2 \right)\Bigg )^{1/2} \\
& \hspace{0.85in} \cdot \Bigg(\sum_{T \in \triangulation} \Vert \vec{v} \Vert_{1, \tilde{\omega}_T}^2 + \Vert \psi \Vert_{1, \tilde{\omega}_T}^2 +  \sum_{E \in \mathcal{E}_{h, \Omega}} \Vert \vec{v} \Vert_{1, \tilde{\omega}_E}^2 + \Vert \psi \Vert_{1, \tilde{\omega}_E}^2\Bigg)^{1/2}.
\end{align*}
Finally, there exists a constant $C_* > 0$ independent of $h$ taking into account repeated elements such that
\begin{align*}
\left (\sum_{T \in \triangulation} \Vert \vec{v} \Vert_{1, \tilde{\omega}_T}^2 + \Vert \psi \Vert_{1, \tilde{\omega}_T}^2 +  \sum_{E \in \mathcal{E}_{h, \Omega}} \Vert \vec{v} \Vert_{1, \tilde{\omega}_E}^2 + \Vert \psi \Vert_{1, \tilde{\omega}_E}^2 \right)^{1/2} \leq C_*  \left \Vert [\vec{v}, \psi] \right \Vert_Y.
\end{align*}
Hence,
\begin{align*}
&\Vert (\text{Id}_Y - R_h)^*\tilde{\mathcal{P}}_h(\director_h, \phi_h) \Vert_{Y^*} \\
& \leq \sup_{\substack{[\vec{v}, \psi] \in Y \\ \Vert [\vec{v}, \psi] \Vert_Y = 1}} C_* \tilde{C} \Vert [\vec{v}, \psi] \Vert_Y \Bigg( \sum_{T \in \triangulation} h_T^2 \left( \Vert \vec{p} \Vert_{0, T}^2 + \Vert q \Vert_{0, T}^2 \right) \nonumber \\
& \hspace{1.25in} + \sum_{E \in \mathcal{E}_{h, \Omega}} h_E \left( \Vert \hat{\vec{p}} \Vert_{0, E}^2 + \Vert \hat{q} \Vert_{0, E}^2 \right) \Bigg)^{1/2}  \leq C \left ( \sum_{T \in \triangulation} \Theta_T^2 \right)^{1/2}.
\end{align*}
The final inequality is obtained by simply noting that the jump components are summed over $E \in \mathcal{E}_{h, \Omega}$.
\end{proof}

Next, define the finite-dimensional auxiliary space $\tilde{Y}_h \subset Y$ as
\begin{align*}
\tilde{Y}_h &= \text{span} \{ [\Psi_T \vec{v}, 0], [\Psi_E P \sigma, 0], [\vec{0}, \Psi_T \psi], [\vec{0}, \Psi_E P \tau] \\
& \hspace{0.8in} : \vec{v} \in [\Pi_{k \vert_T}]^3, \sigma \in [\Pi_{k \vert_E}]^3, \psi \in \Pi_{l \vert_T}, \tau \in \Pi_{l \vert_E}, T \in \triangulation, E \in \mathcal{E}_{h, \Omega} \},
\end{align*}
where $k \geq \max(3s, s+2(t-1))$ and $l \geq 2s + (t-1)$. For this space, the following lemma holds.
\begin{lemma} \label{auxiliarySpaceUpperBound}
There exists a $C > 0$ independent of $h$ such that
\begin{align*}
\Vert \tilde{\mathcal{P}}_h(\director_h, \phi_h) \Vert_{\tilde{Y}_h^*} \leq C \left ( \sum_{T \in \triangulation} \Theta_T^2 \right)^{1/2}.
\end{align*}
\end{lemma}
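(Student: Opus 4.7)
The plan parallels the proof of Lemma \ref{restrictionUpperBound}, replacing the Cl\'{e}ment interpolation bounds with the bubble-function estimates of Lemma \ref{cutoffinequalities} and Corollary \ref{cutoffexpansion}. I would first fix an arbitrary test function $w = [\vec{v}, \psi] \in \tilde{Y}_h$ and exploit the direct-sum structure of $\tilde{Y}_h$ to decompose
\begin{align*}
\vec{v} = \sum_{T \in \triangulation} \Psi_T \vec{v}_T + \sum_{E \in \mathcal{E}_{h,\Omega}} \Psi_E P \vec{\sigma}_E, \qquad \psi = \sum_T \Psi_T \psi_T + \sum_E \Psi_E P \tau_E,
\end{align*}
with polynomial coefficients of the degrees admitted in the definition of $\tilde{Y}_h$. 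Applying \eqref{penaltyIntegrationByParts} and using that element bubbles vanish on $\partial T$ while edge bubbles vanish on every boundary edge of their supporting elements except $E$, the pairing $\langle \tilde{\mathcal{P}}_h(\director_h, \phi_h), w \rangle$ splits into local contributions: one volume integral on $T$ per element bubble, and one edge integral on $E$ plus two volume integrals on $\omega_E$ per edge bubble.

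Each local contribution is then controlled using Cauchy-Schwarz and appropriately scaled Poincar\'{e} and trace inequalities. For an element bubble $\Psi_T \vec{v}_T \in H^1_0(T)^3$, an $h_T$-scaled Poincar\'{e} estimate yields $\Vert \Psi_T \vec{v}_T \Vert_{0, T} \leq C h_T \Vert \Psi_T \vec{v}_T \Vert_{1, T}$, so $|\int_T \vec{p} \cdot (\Psi_T \vec{v}_T) \, dV| \leq C h_T \Vert \vec{p} \Vert_{0, T} \Vert \Psi_T \vec{v}_T \Vert_{1, T}$; the same argument handles $q$. For an edge bubble $\Psi_E P \vec{\sigma}_E$, the Poincar\'{e} bound remains valid on each $T' \subset \omega_E$ because the bubble vanishes on the positive-measure set $\partial T' \setminus E$, which disposes of the volume terms. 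For the edge integral, combining a standard scaled trace inequality with the just-noted Poincar\'{e} bound supplies $\Vert \Psi_E P \vec{\sigma}_E \Vert_{0, E} \leq C h_E^{1/2} \Vert \Psi_E P \vec{\sigma}_E \Vert_{1, \omega_E}$, producing a contribution of $C h_E^{1/2} \Vert \hat{\vec{p}} \Vert_{0, E} \Vert \Psi_E P \vec{\sigma}_E \Vert_{1, \omega_E}$ and precisely matching the jump-term structure of $\Theta_T$.

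Summing the local bounds over all bubble components, applying the discrete Cauchy-Schwarz inequality, and using \eqref{quasiuniform} to absorb the bounded overlap of bubble supports into an $h$-independent constant produces
\begin{align*}
|\langle \tilde{\mathcal{P}}_h(\director_h, \phi_h), w \rangle| \leq C \left( \sum_{T \in \triangulation} \Theta_T^2 \right)^{1/2} \left( \sum_j \Vert w_j \Vert_Y^2 \right)^{1/2},
\end{align*}
where $\{w_j\}$ indexes the individual bubble components of $w$. The main obstacle is the last stability step $\bigl( \sum_j \Vert w_j \Vert_Y^2 \bigr)^{1/2} \leq C \Vert w \Vert_Y$, which amounts to showing that the bubble decomposition is $H^1$-stable uniformly in $h$. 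This follows by a reference-element argument exploiting injectivity of the bubble map on the finite-dimensional spaces $V_{\hat{T}}$ and $V_{\hat{E}}$, norm equivalence there, and affine scaling back to $T$ via \eqref{quasiuniform}, all with constants independent of $h$. Taking the supremum over $\Vert w \Vert_Y \leq 1$ then delivers the claim.
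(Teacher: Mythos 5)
Your proposal is correct and rests on the same core estimates as the paper's proof: elementwise/edgewise Cauchy--Schwarz applied to the integrated-by-parts form \eqref{penaltyIntegrationByParts}, the scaled bounds $\Vert \vec{v}_h \Vert_{0,T} \leq C h_T \Vert \vec{v}_h \Vert_{1,T}$ and $\Vert \vec{v}_h \Vert_{0,E} \leq C h_E^{1/2} \Vert \vec{v}_h \Vert_{1,\omega_E}$ valid on the bubble space, the discrete Cauchy--Schwarz inequality, and a finite-overlap constant coming from \eqref{quasiuniform}. The one organizational difference is that you first split the test function into its individual element- and edge-bubble components and therefore must pay for reassembly with the $H^1$-stability estimate $\bigl(\sum_j \Vert w_j \Vert_Y^2\bigr)^{1/2} \leq C \Vert w \Vert_Y$, which you correctly flag as the main obstacle; the paper avoids this step entirely by applying the scaled Poincar\'{e} and trace inequalities directly to the restriction of the \emph{whole} test function to $T$ and $E$ --- legitimate because that restriction still lives in a fixed finite-dimensional space on the reference element that contains no nonzero constants (all bubbles vanish at the vertices), so the reference-element norm equivalence and affine scaling go through unchanged. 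Your stability claim is provable by essentially the argument you sketch (note that the uniformity in $h$ requires observing that on the bubble space the $H^1$ norm is uniformly equivalent to the $H^1$ seminorm, since the full norm does not scale homogeneously under the affine map), but it is an avoidable detour; both routes deliver the same bound with $h$-independent constants.
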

\begin{proof}
Applying the Cauchy-Schwarz inequality implies that
\begin{align*}
\Vert \tilde{\mathcal{P}}_h(\director_h, \phi_h) \Vert_{\tilde{Y}_h^*} &= \sup_{\substack{[\vec{v_h}, \psi_h] \in \tilde{Y}_h \\ \Vert [\vec{v}_h, \psi_h] \Vert_Y = 1}} \sum_{T \in \triangulation} \int_T \vec{p} \cdot \vec{v}_h \diff{V} + \int_T q \cdot \psi_h \diff{V} \nonumber \\
& \hspace{1.5in} + \sum_{E \in \mathcal{E}_{h, \Omega}} \int_E \hat{\vec{p}} \cdot \vec{v}_h \diff{S} + \int_E \hat{q} \cdot \psi_h \diff{S} \\
& \leq \sup_{\substack{[\vec{v_h}, \psi_h] \in \tilde{Y}_h \\ \Vert [\vec{v}_h, \psi_h] \Vert_Y = 1}} \sum_{T \in \triangulation} \Vert \vec{p} \Vert_{0, T} \Vert \vec{v}_h \Vert_{0, T} + \Vert q \Vert_{0, T} \Vert \psi_h \Vert_{0, T} \\
& \hspace{1.5in} + \sum_{E \in \mathcal{E}_{h, \Omega}} \Vert \hat{\vec{p}} \Vert_{0, E} \Vert \vec{v}_h \Vert_{0, E} + \Vert \hat{q} \Vert_{0, E} \Vert \psi_h \Vert_{0, E}.
\end{align*}
Using the definition of $\tilde{Y}_h$, quasi-uniformity of the mesh, and standard finite-element scaling arguments implies that
\begin{align}
&\Vert \tilde{\mathcal{P}}_h(\director_h, \phi_h) \Vert_{\tilde{Y}_h^*} \nonumber \\
& \leq \sup_{\substack{[\vec{v_h}, \psi_h] \in \tilde{Y}_h \\ \Vert [\vec{v}_h, \psi_h] \Vert_Y = 1}} \sum_{T \in \triangulation} C_1 h_T \Vert \vec{p} \Vert_{0, T} \Vert \vec{v}_h \Vert_{1, T} + C_1 h_T \Vert q \Vert_{0, T} \Vert \psi_h \Vert_{1, T} \nonumber \\
& \hspace{0.9in} + \sum_{E \in \mathcal{E}_{h, \Omega}} C_2 h_E^{1/2} \Vert \hat{\vec{p}} \Vert_{0, E} \Vert \vec{v}_h \Vert_{1, \omega_E} + C_2 h_E^{1/2} \Vert \hat{q} \Vert_{0, E} \Vert \psi_h \Vert_{1, \omega_E} \nonumber \\
& \leq \sup_{\substack{[\vec{v_h}, \psi_h] \in \tilde{Y}_h \\ \Vert [\vec{v}_h, \psi_h] \Vert_Y = 1}} \tilde{C} \Bigg( \sum_{T \in \triangulation} h_T^2 \left( \Vert \vec{p} \Vert_{0, T}^2 + \Vert q \Vert_{0, T}^2 \right) + \sum_{E \in \mathcal{E}_{h, \Omega}} h_E \left( \Vert \hat{\vec{p}} \Vert_{0, E}^2 + \Vert \hat{q} \Vert_{0, E}^2 \right) \Bigg)^{1/2} \nonumber \\
& \hspace{0.9in} \cdot \Bigg(  \sum_{T \in \triangulation} \Vert \vec{v}_h \Vert_{1, T}^2 + \Vert \psi_h \Vert_{1, T}^2 + \sum_{E \in \mathcal{E}_{h, \Omega}} \Vert \vec{v}_h \Vert_{1, \omega_E}^2 + \Vert \psi_h \Vert_{1, \omega_E}^2 \Bigg)^{1/2}, \label{CSSumInequality}
\end{align}
where $\tilde{C} = \max(C_1, C_2)$ and \eqref{CSSumInequality} is given by the Cauchy-Schwarz inequality for sums. Note, as above, there exists a $C_* > 0$, independent of $h$ and taking into account repeated elements in each sum, such that
\begin{align*}
\Bigg(  \sum_{T \in \triangulation} \Vert \vec{v}_h \Vert_{1, T}^2 + \Vert \psi_h \Vert_{1, T}^2 + \sum_{E \in \mathcal{E}_{h, \Omega}} \Vert \vec{v}_h \Vert_{1, \omega_E}^2 + \Vert \psi_h \Vert_{1, \omega_E}^2 \Bigg)^{1/2} \leq C_* \Vert [\vec{v}_h, \psi_h] \Vert_Y.
\end{align*}
Applying the inequality above to \eqref{CSSumInequality} and using the fact that the supremum is taken over $\Vert [\vec{v}_h, \psi_h] \Vert_Y = 1$ implies that
\begin{align*}
&\Vert \tilde{\mathcal{P}}_h(\director_h, \phi_h) \Vert_{\tilde{Y}_h^*} \\
&\leq C_* \tilde{C} \Bigg( \sum_{T \in \triangulation} h_T^2 \left( \Vert \vec{p} \Vert_{0, T}^2 + \Vert q \Vert_{0, T}^2 \right ) + \sum_{E \in \mathcal{E}_{h, \Omega}} h_E \left( \Vert \hat{\vec{p}} \Vert_{0, E}^2 + \Vert \hat{q} \Vert_{0, E}^2 \right) \Bigg)^{1/2} \\
& \leq C \left ( \sum_{T \in \triangulation} \Theta_T^2 \right)^{1/2}.
\end{align*}
As in the previous proof, the last inequality makes use of the fact that the jump components are summed over $E \in \mathcal{E}_{h, \Omega}$.
\end{proof}

The final inequality required to demonstrate reliability of the error estimator is
\begin{align*}
\Vert (\text{Id}_Y - R_h)^*\tilde{\mathcal{P}}_h(\director_h, \phi_h) \Vert_{Y^*} \leq C \Vert \tilde{\mathcal{P}}_h(\director_h, \phi_h) \Vert_{\tilde{Y}_h^*},
\end{align*}
for $C > 0$ and independent of $h$. With the result of Lemma \ref{restrictionUpperBound}, it is sufficient to prove the next lemma.
\begin{lemma} \label{auxilliarySpaceLowerBound}
There exists a $C > 0$, independent of $h$ such that 
\begin{align*}
C \left ( \sum_{T \in \triangulation} \Theta_T^2 \right)^{1/2} \leq \Vert \tilde{\mathcal{P}}_h(\director_h, \phi_h) \Vert_{\tilde{Y}_h^*}.
\end{align*}
\end{lemma}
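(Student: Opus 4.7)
The plan is to apply the bubble function technique of Verf\"urth, constructing explicit test functions in $\tilde{Y}_h$ whose evaluation under $\tilde{\mathcal{P}}_h$ reproduces the individual terms of $\Theta_T^2$ up to factors that can be absorbed. The polynomial degrees $k$ and $l$ specified in the definition of $\tilde{Y}_h$ are precisely chosen so that $\vec{p}|_T$, $q|_T$, $\hat{\vec{p}}|_E$, and $\hat{q}|_E$ each lie in the corresponding local spaces $V_T$, $V_E$, $Q_T$, and $Q_E$, which makes the constructions below admissible and allows the cutoff inequalities of Lemma \ref{cutoffinequalities} to apply with $u = \vec{p}$, $\sigma = \hat{\vec{p}}$, etc.

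First I would handle the element contributions. Assemble the globally disjoint test function
\[
(\vec{v}_h, \psi_h) = \Big(\sum_{T \in \triangulation} h_T^2 \Psi_T \vec{p}|_T,\ \sum_{T \in \triangulation} h_T^2 \Psi_T q|_T\Big) \in \tilde{Y}_h.
\]
Because $\Psi_T$ vanishes on $\partial T$, the edge-jump portions of \eqref{penaltyIntegrationByParts} disappear, and \eqref{cutoff1} bounds $\langle \tilde{\mathcal{P}}_h(\director_h, \phi_h), (\vec{v}_h, \psi_h)\rangle$ below by $C \sum_T h_T^2 (\|\vec{p}\|_{0, T}^2 + \|q\|_{0, T}^2)$. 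Using disjoint supports together with \eqref{expansion1}, the test-function norm is bounded above by $\|(\vec{v}_h, \psi_h)\|_Y^2 \leq C \sum_T h_T^2 (\|\vec{p}\|_{0, T}^2 + \|q\|_{0, T}^2)$. Dividing and invoking the definition of the dual norm yields the element estimate
\[
\sum_{T \in \triangulation} h_T^2 (\|\vec{p}\|_{0, T}^2 + \|q\|_{0, T}^2) \leq C \|\tilde{\mathcal{P}}_h(\director_h, \phi_h)\|_{\tilde{Y}_h^*}^2.
\]

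For the edge terms, I would assemble analogously $\sum_{E} h_E \Psi_E P \hat{\vec{p}}$ and $\sum_{E} h_E \Psi_E P \hat{q}$; these have uniformly bounded overlap by quasi-uniformity. Inequality \eqref{cutoff2} on the surface integrals produces a lower bound of $C \sum_E h_E (\|\hat{\vec{p}}\|_{0, E}^2 + \|\hat{q}\|_{0, E}^2)$. The residual volume contributions $h_E \int_{T'} \vec{p} \cdot \Psi_E P \hat{\vec{p}} \, dV$ for $T' \subset \omega_E$ are bounded via Cauchy--Schwarz, \eqref{cutoff5}, and $h_E \sim h_{T'}$; a Young's inequality with a small parameter $\varepsilon$ splits each such term into a small multiple of $h_E \|\hat{\vec{p}}\|_{0, E}^2$ (absorbed on the left) plus a multiple of $h_{T'}^2 \|\vec{p}\|_{0, T'}^2$ (absorbed into the element estimate just established). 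Combining with the upper bound on $\|(\vec{v}_h, \psi_h)\|_Y$ given by \eqref{expansion2} and \eqref{cutoff5} yields the edge estimate, and summing with the element bound produces the desired inequality.

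The main obstacle is exactly the coupling between edge and element residuals in the second step: the edge bubble $\Psi_E P \hat{\vec{p}}$ does not vanish on the interiors of the two elements in $\omega_E$, so interior residuals contaminate the raw edge estimate. Disentangling them requires a careful Young absorption driven by the small parameter $\varepsilon$, with quasi-uniformity providing the crucial $h_E \sim h_{T'}$ identification that makes the element and edge scales comparable. Once these absorptions are in place, the two estimates combine to give $\big(\sum_{T \in \triangulation} \Theta_T^2\big)^{1/2} \leq C \|\tilde{\mathcal{P}}_h(\director_h, \phi_h)\|_{\tilde{Y}_h^*}$, as required.
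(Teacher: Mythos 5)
Your proof is correct and rests on the same foundation as the paper's --- the Verf\"urth bubble-function machinery of Lemma \ref{cutoffinequalities} and Corollary \ref{cutoffexpansion}, with the polynomial degrees $k,l$ chosen so that $\vec{p}|_T$, $q|_T$, $\hat{\vec{p}}|_E$, $\hat{q}|_E$ are admissible arguments --- but it is assembled differently. You build two \emph{global} test functions (a sum of element bubbles weighted by $h_T^2$ and a sum of edge bubbles weighted by $h_E$), establish the element estimate first, and then decouple the volume contamination of the edge pairing by Cauchy--Schwarz, \eqref{cutoff5}, and a Young absorption with small $\varepsilon$, feeding the leftover $h_{T'}^2\Vert\vec{p}\Vert_{0,T'}^2$ terms back into the already-proved element bound. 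The paper instead works entirely \emph{locally}: it keeps the supremum formulation of \eqref{cutoff1}--\eqref{cutoff2}, bounds $h_T\Vert[\vec{p},q]\Vert_{0,T}$ and $h_E^{1/2}\Vert[\hat{\vec{p}},\hat{q}]\Vert_{0,E}$ each by a supremum of $\langle\tilde{\mathcal{P}}_h(\director_h,\phi_h),\cdot\rangle$ over test functions supported in $T$ and $\omega_E$, controls the volume term in the edge estimate by re-invoking the local element inequality \eqref{firstLowerBoundInequality} at the level of dual pairings (no Young splitting), and only then sums the resulting elementwise bound \eqref{localLowerBound} over $\triangulation$. The trade-off is worth noting: your global assembly makes the passage from local to global estimates completely transparent (disjoint supports for the element bubbles, finite overlap for the edge bubbles), whereas the paper's local organization produces the intermediate inequality $\bar{C}\,\Theta_T \leq \sup_{[\vec{v}_h,\psi_h]\in\tilde{Y}_{h\vert\omega_T},\,\Vert[\vec{v}_h,\psi_h]\Vert_Y=1}\langle\tilde{\mathcal{P}}_h(\director_h,\phi_h),(\vec{v}_h,\psi_h)\rangle$ as a byproduct, and that elementwise bound is exactly what the main theorem later needs to prove the local efficiency estimate \eqref{efficiency}. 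So your argument suffices for this lemma, but if you intend to continue to the efficiency result you would want to retain a local version of your estimates (which your construction does yield, element by element, with only minor bookkeeping). One small point of rigor: where you take $v=\vec{p}$ in \eqref{cutoff1} to get $\int_T\Psi_T\vert\vec{p}\vert^2\geq C\Vert\vec{p}\Vert_{0,T}^2$, you are really using the norm-equivalence form of that inequality on the finite-dimensional space $V_T$ rather than the supremum form as literally stated; this is standard and harmless, but worth a sentence.
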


{\em Proof}.
Fix an arbitrary $T \in \triangulation$ and an edge $E \in \mathcal{E}(T) \cap \mathcal{E}_{h, \Omega}$. Further, define the restricted space $\tilde{Y}_{h \vert \omega}$, for $\omega \in \{T, \omega_E, \omega_T \}$, as the set of functions $\vec{f} \in \tilde{Y}_h$ with $\text{supp}(\vec{f}) \subset \omega$. Finally, denote the product spaces $ \left( [\Pi_{k \vert T}]^3 \times \Pi_{l \vert T} \right ) \backslash \{(\vec{0}, 0) \}$ and $\left ( [\Pi_{k \vert E}]^3 \times \Pi_{l \vert E} \right ) \backslash \{(\vec{0}, 0) \}$ as $\Pi_{k, l, T}$, $\Pi_{k, l, E}$, respectively. Note that the constants in this proof correspond to those of Lemma \ref{cutoffinequalities} or Corollary \ref{cutoffexpansion}. First, consider
\begin{align}
&C_1 \bar{C}_4 ^{-1} h_T \Vert [\vec{p}, q] \Vert_{0, T} \nonumber \\
& \qquad \leq \sup_{[\vec{w}, u] \in \Pi_{k, l, T}} \bar{C}_4^{-1} h_T \Vert [\Psi_T \vec{w}, \Psi_T u] \Vert_{0, T}^{-1} \int_T (\vec{p}, q) \cdot (\Psi_T \vec{w}, \Psi_T u) \diff{V} \label{C1SupInequality} \\
& \qquad \leq \sup_{[\vec{w}, u] \in \Pi_{k, l, T}} \Vert [\Psi_T \vec{w}, \Psi_T u] \Vert_{1, T}^{-1} \int_T (\vec{p}, q) \cdot (\Psi_T \vec{w}, \Psi_T u) \diff{V}. \label{C4PointcareInequality}
\end{align}
The inequality in \eqref{C1SupInequality} is given by applying \eqref{cutoff1} of Lemma \ref{cutoffinequalities}, while the subsequent inequality in \eqref{C4PointcareInequality} relies on \eqref{expansion1} of Corollary \ref{cutoffexpansion}. Noting that both $\Psi_T \vec{w}$ and $\Psi_T u$ vanish at the boundary of $T$,
\begin{align}
C_1 \bar{C}_4 ^{-1} h_T \Vert [\vec{p}, q] \Vert_{0, T} &\leq \sup_{[\vec{w}, u] \in \Pi_{k, l, T}} \Vert [\Psi_T \vec{w}, \Psi_T u] \Vert_{1, T}^{-1} \langle \tilde{\mathcal{P}}_h(\director_h, \phi_h), (\Psi_T \vec{w}, \Psi_T u) \rangle \nonumber \\
& \leq \sup_{\substack{[\vec{v}_h, \psi_h] \in \tilde{Y}_{h \vert T} \\ \Vert [\vec{v}_h, \psi_h] \Vert_Y = 1}} \langle \tilde{\mathcal{P}}_h(\director_h, \phi_h), (\vec{v}_h, \psi_h) \rangle. \label{firstLowerBoundInequality}
\end{align}
Next, by applying \eqref{cutoff2} from Lemma \ref{cutoffinequalities} and observing that the integrals and norms are taken over $E$ where $P$ does not modify the values of either $\sigma$ or $\beta$,
\begin{align*}
&C_2 \bar{C}_6^{-1} C_7^{-1} h_{E}^{1/2} \Vert [\hat{\vec{p}}, \hat{q}] \Vert_{0, E} \nonumber \\
& \hspace{0.75in} \leq \sup_{[\sigma, \beta] \in \Pi_{k, l, E}} \frac{\bar{C}_6^{-1} h_E}{C_7 h_E^{1/2} \Vert [P \sigma, P \beta] \Vert_{0, E}} \int_E (\hat{\vec{p}}, \hat{q}) \cdot (\Psi_E P \sigma, \Psi_E P \beta)  \diff{S}.
\end{align*}
Now note that $\Psi_E P \sigma$ is supported on $\omega_E$ and that the norm in the denominator is taken over $E$. This implies that
\begin{align}
&C_2 \bar{C}_6^{-1} C_7^{-1} h_{E}^{1/2} \Vert [\hat{\vec{p}}, \hat{q}] \Vert_{0, E} \nonumber \\
&\leq \sup_{[\sigma, \beta] \in \Pi_{k, l, E}} \frac{\bar{C}_6^{-1} h_E}{C_7 h_E^{1/2} \Vert [\sigma, \beta] \Vert_{0, E}} \bigg ( \langle \tilde{\mathcal{P}}_h(\director_h, \phi_h), (\Psi_E P \sigma, \Psi_E P \beta) \rangle \nonumber \\
&\hspace{2.5in} - \int_{\omega_E} (\vec{p}, q) \cdot (\Psi_E P \sigma, \Psi_E P \beta) \diff{V} \bigg)\nonumber \\
& \leq \sup_{[\sigma, \beta] \in \Pi_{k, l, E}} \frac{\bar{C}_6^{-1} h_E}{\Vert [\Psi_E P \sigma, \Psi_E P \beta] \Vert_{0, \omega_E}} \bigg ( \langle \tilde{\mathcal{P}}_h(\director_h, \phi_h), (\Psi_E P \sigma, \Psi_E P \beta) \rangle \nonumber \\
&\hspace{2.5in} - \int_{\omega_E} (\vec{p}, q) \cdot (\Psi_E P \sigma, \Psi_E P \beta) \diff{V} \bigg) \label{C7Inequality},
\end{align}
where \eqref{C7Inequality} is given by \eqref{cutoff5} of Lemma \ref{cutoffinequalities} with $C_7$ properly modified to incorporate each element of $\omega_E$. Distributing the fraction and applying \eqref{expansion2} of Corollary \ref{cutoffexpansion}, to the first component and the Cauchy-Schwarz inequality to the second yields
\begin{align}
&C_2 \bar{C}_6^{-1} C_7^{-1} h_{E}^{1/2} \Vert [\hat{\vec{p}}, \hat{q}] \Vert_{0, E} \nonumber \\
& \leq \sup_{[\sigma, \beta] \in \Pi_{k, l, E}} \Vert [\Psi_E P \sigma, \Psi_E P \beta] \Vert_{1, \omega_E}^{-1} \langle \tilde{\mathcal{P}}_h(\director_h, \phi_h), (\Psi_E P \sigma, \Psi_E P \beta) \rangle \nonumber \\
& \hspace{1.5in} + \bar{C}_6^{-1} h_E \sum_{T \in \omega_E} \Vert [\hat{\vec{p}}, \hat{q}] \Vert_{0, T} \nonumber \\
& \leq \sup_{\substack{[\vec{v}_h, \psi_h] \in \tilde{Y}_{h \vert \omega_E} \\ \Vert [\vec{v}_h, \psi_h] \Vert_{Y} =1}} \Vert [\vec{v}_h, \psi_h] \Vert_{1, \omega_E}^{-1} \langle \tilde{\mathcal{P}}_h(\director_h, \phi_h), (\vec{v}_h, \psi_h) \rangle \nonumber \\
& \hspace{1.5in} + C_d \sup_{\substack{[\vec{v}_h, \psi_h] \in \tilde{Y}_{h \vert \omega_E} \\ \Vert [\vec{v}_h, \psi_h] \Vert_{Y} =1}} \langle \tilde{\mathcal{P}}_h(\director_h, \phi_h), (\vec{v}_h, \psi_h) \rangle, \label{RelateBackToPreviousLowerBound}
\end{align}
where the final inequality in \eqref{RelateBackToPreviousLowerBound} is given by expanding the space over which the supremum is taken in the first summand and using the inequality in \eqref{firstLowerBoundInequality}, with $C_d$ relating the constants $\bar{C}_6^{-1} h_E$ and $C_1 \bar{C}_4^{-1} h_T$ and taking care of the summation over $\omega_E$. Note that the supremums only increase when taken over $\omega_T$. Gathering the bounds in \eqref{firstLowerBoundInequality} and \eqref{RelateBackToPreviousLowerBound} and applying the inequality
\begin{align}
\left ( \sum_i a_i \right )^{1/2} \leq \sum_i a_i^{1/2}, \label{positiveAiLowerBound}
\end{align}
for $a_i > 0$, implies that
\begin{align}
\bar{C} \Theta_T \leq \sup_{\substack{[\vec{v}_h, \psi_h] \in \tilde{Y}_{h \vert \omega_T} \\ \Vert [\vec{v}_h, \psi_h] \Vert_{Y} =1}} \langle \tilde{\mathcal{P}}_h(\director_h, \phi_h), (\vec{v}_h, \psi_h) \rangle. \label{localLowerBound}
\end{align}
Finally, summing over $T \in \triangulation$ and applying \eqref{positiveAiLowerBound} again yields
\begin{align*}
C \left ( \sum_{T \in \triangulation} \Theta_T^2 \right)^{1/2} \leq \Vert \tilde{\mathcal{P}}_h(\director_h, \phi_h) \Vert_{\tilde{Y}_h^*}. \qquad \endproof
\end{align*}

These results enable the statement and proof of the main result of this section establishing reliability and local efficiency of the proposed a posteriori error estimator. 
\begin{theorem}
Say that $(\director_*, \phi_*)$ is a solution to Equation \eqref{PenaltyFOOC} satifying the assumptions of Proposition \ref{nonlinearErrorEstimation}. Let $(\director_h, \phi_h)$ be a discrete solution to Equation \eqref{discretePenaltyFOOC} such that $\Vert \mathcal{P}_h(\director_h, \phi_h) \Vert_{Y_h^*}  = 0$ and $(\director_h, \phi_h) \in B((\director_*, \phi_*), R)$. Then there exist $C_r, C_e > 0$, independent of $h$, such that
\begin{align}
&\Vert (\director_*, \phi_*) - (\director_h, \phi_h) \Vert_1 \leq C_r \left ( \sum_{T \in \triangulation} \Theta_T^2 \right)^{1/2}, \label{reliability} \\
&\Theta_T \leq C_e \Vert (\director_*, \phi_*) - (\director_h, \phi_h) \Vert_{1, \omega_T} \label{efficiency}.
\end{align}
\end{theorem}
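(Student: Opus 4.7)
The proof decomposes into the two inequalities \eqref{reliability} and \eqref{efficiency}, and the heavy lifting has already been done in the four preceding lemmas; my plan is essentially to assemble them in the right order using Verf\"urth's two propositions from Section \ref{preliminaries}.

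For the reliability bound \eqref{reliability}, I would first invoke Proposition \ref{nonlinearErrorEstimation} with $F = \mathcal{P}$, $u_0=(\director_*,\phi_*)$, and $u=(\director_h,\phi_h)\in B(u_0,R)$, yielding
\begin{equation*}
\Vert (\director_*,\phi_*)-(\director_h,\phi_h)\Vert_1 \le 2\,\Vert D\mathcal{P}(\director_*,\phi_*)^{-1}\Vert_{\mathcal{L}(Y^*,X)}\,\Vert \mathcal{P}(\director_h,\phi_h)\Vert_{Y^*}.
\end{equation*}
It then remains to control $\Vert \mathcal{P}(\director_h,\phi_h)\Vert_{Y^*}$ by the global estimator. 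I would do this by applying Proposition \ref{auxiliarySpaceInequality} with the componentwise Cl\'ement operator as $R_h$, the auxiliary space $\tilde{Y}_h$ defined immediately after Lemma \ref{restrictionUpperBound}, and $\tilde{\mathcal{P}}_h = \mathcal{P}\vert_{X_h}$. The hypothesis of that proposition is discharged by chaining Lemmas \ref{restrictionUpperBound} and \ref{auxilliarySpaceLowerBound} to obtain a constant $C_0$ with
\begin{equation*}
\Vert (\text{Id}_Y - R_h)^*\tilde{\mathcal{P}}_h(\director_h,\phi_h)\Vert_{Y^*} \le C\Bigl(\sum_{T\in\triangulation}\Theta_T^2\Bigr)^{1/2} \le C_0\,\Vert \tilde{\mathcal{P}}_h(\director_h,\phi_h)\Vert_{\tilde{Y}_h^*}.
\end{equation*}
In the conclusion of Proposition \ref{auxiliarySpaceInequality}, the second and third terms vanish by \eqref{zeroComponent1}--\eqref{zeroComponent2}, and the fourth vanishes by the hypothesis $\Vert \mathcal{P}_h(\director_h,\phi_h)\Vert_{Y_h^*}=0$. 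The only surviving term is then bounded by $C(\sum_T \Theta_T^2)^{1/2}$ via Lemma \ref{auxiliarySpaceUpperBound}, finishing the reliability argument.

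For the efficiency bound \eqref{efficiency}, the key observation is that the local inequality \eqref{localLowerBound}, already established inside the proof of Lemma \ref{auxilliarySpaceLowerBound}, gives
\begin{equation*}
\bar C\,\Theta_T \le \sup_{\substack{(\vec v_h,\psi_h)\in \tilde{Y}_{h\vert \omega_T} \\ \Vert (\vec v_h,\psi_h)\Vert_Y = 1}} \langle \tilde{\mathcal{P}}_h(\director_h,\phi_h),(\vec v_h,\psi_h)\rangle.
\end{equation*}
Because $\mathcal{P}(\director_*,\phi_*)=0$ and $\tilde{\mathcal{P}}_h$ coincides with $\mathcal{P}$ on $X_h$, I would replace the duality pairing by $\langle \mathcal{P}(\director_h,\phi_h)-\mathcal{P}(\director_*,\phi_*),(\vec v_h,\psi_h)\rangle$ and apply the mean value identity $\mathcal{P}(\director_h,\phi_h)-\mathcal{P}(\director_*,\phi_*) = \int_0^1 D\mathcal{P}(u_* + t(u_h-u_*))(u_h-u_*)\,dt$. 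Lipschitz continuity of $D\mathcal{P}$ at $(\director_*,\phi_*)$, together with the smallness of $R$ afforded by Proposition \ref{nonlinearErrorEstimation}, bounds $\Vert D\mathcal{P}(\cdot)\Vert_{\mathcal{L}(X,Y^*)}$ uniformly along the segment. Because $(\vec v_h,\psi_h)\in \tilde{Y}_{h\vert \omega_T}$ is supported in $\omega_T$, only the restrictions of $(\director_*,\phi_*)$ and $(\director_h,\phi_h)$ to $\omega_T$ enter the evaluation, yielding a bound of the form $C\Vert (\director_*,\phi_*)-(\director_h,\phi_h)\Vert_{1,\omega_T}\Vert (\vec v_h,\psi_h)\Vert_{1,\omega_T}$, which on normalization gives \eqref{efficiency}.

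The main obstacle is this last localization: the forms in $\mathcal{C}$ are cubic and quartic in the unknowns, and the penalty term $2\zeta\langle \vec v\cdot\director_h,\director_h\cdot\director_h - 1\rangle$ has no obvious $H^1$-Lipschitz constant. One has to expand each multilinear form term by term around $(\director_*,\phi_*)$ and absorb cubic remainders using uniform $L^\infty$ control of the director and of $\nabla\phi$; for the discrete iterate this is typically justified via inverse inequalities on the finite-element space combined with regularity assumptions on $(\director_*,\phi_*)$ and the radius $R$. The reliability side, by contrast, is essentially bookkeeping once the three preceding lemmas are in hand.
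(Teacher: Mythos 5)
Your proposal is correct and follows essentially the same route as the paper: reliability is obtained exactly as you describe, by chaining Lemmas \ref{restrictionUpperBound}, \ref{auxilliarySpaceLowerBound}, and \ref{auxiliarySpaceUpperBound} through Proposition \ref{auxiliarySpaceInequality} (with the vanishing terms \eqref{zeroComponent1}--\eqref{zeroComponent2} and $\Vert \mathcal{P}_h(\director_h,\phi_h)\Vert_{Y_h^*}=0$) and then invoking the upper bound of Proposition \ref{nonlinearErrorEstimation}, while efficiency starts from \eqref{localLowerBound} and bounds the localized residual by the localized error. The only difference is that where you re-derive the localized lower bound by hand via the mean value identity and Lipschitz continuity of $D\mathcal{P}$ (correctly flagging the $L^\infty$ control needed to localize the cubic and quartic forms), the paper simply cites the remark in \cite{Verfurth1} that the lower bound of Proposition \ref{nonlinearErrorEstimation} restricts to norms over $\omega_T$.
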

\begin{proof}
Combining Lemmas \ref{restrictionUpperBound} and \ref{auxilliarySpaceLowerBound} implies the bound
\begin{align*}
\Vert (\text{Id}_Y - R_h)^* \tilde{\mathcal{P}}_h(\director_h, \phi_h) \Vert_{Y^*} &\leq C_0 \left ( \sum_{T \in \triangulation} \Theta_T^2 \right)^{1/2}  \leq C_1 \Vert \tilde{\mathcal{P}}_h (\director_h, \phi_h) \Vert_{\tilde{Y}_h^*}
\end{align*}
for $C_0, C_1 > 0$. Thus, the conditions of Proposition \ref{auxiliarySpaceInequality} are fulfilled. Therefore, with the results in Equations \eqref{zeroComponent1} and \eqref{zeroComponent2} and Lemma \ref{auxiliarySpaceUpperBound},
\begin{align*}
\Vert \mathcal{P}(\director_h, \phi_h) \Vert_{Y^*} \leq C_2 \Vert \tilde{\mathcal{P}}_h (\director_h, \phi_h) \Vert_{\tilde{Y}_h^*} \leq C_3 \left ( \sum_{T \in \triangulation} \Theta_T^2 \right)^{1/2}.
\end{align*}
It is straightforward to show that, for the defined Sobolev spaces and $D \mathcal{P}(\director_*, \phi_*) \in \text{Isom}(X_0, Y)$, Proposition \ref{nonlinearErrorEstimation} still holds. The upper bound from Proposition \ref{nonlinearErrorEstimation} then implies that
\begin{align*}
\Vert (\director_*, \phi_*) - (\director_h, \phi_h) \Vert_1 &\leq 2 \Vert D \mathcal{P}(\director_*, \phi_*)^{-1} \Vert_{\mathcal{L}(Y^*, X_0)} \Vert \mathcal{P}(\director_h, \phi_h) \Vert_{Y^*} \\
& \leq 2 C_3 \Vert D \mathcal{P}(\director_*, \phi_*)^{-1} \Vert_{\mathcal{L}(Y^*, X_0)} \left ( \sum_{T \in \triangulation} \Theta_T^2 \right)^{1/2}.
\end{align*}
Setting $C_r = 2C_3 \Vert D \mathcal{P}(\director_*, \phi_*)^{-1} \Vert_{\mathcal{L}(Y^*, X_0)}$ proves the inequality in \eqref{reliability}.

As noted in \cite[Remark 2.2]{Verfurth1}, the lower bound of Proposition \ref{nonlinearErrorEstimation} remains valid when restricted to appropriate norms over the open subset $\omega_T \subset \Omega$. Together with Inequality \eqref{localLowerBound}, this implies that
\begin{align*}
\Theta_T \leq C_4 \sup_{\substack{[\vec{v}_h, \psi_h] \in \tilde{Y}_{h \vert \omega_T} \\ \Vert [\vec{v}_h, \psi_h] \Vert_{Y} =1}} \langle \tilde{\mathcal{P}}_h(\director_h, \phi_h), (\vec{v}_h, \psi_h) \rangle &\leq C_4 \Vert \mathcal{P}(\director_h, \phi_h) \Vert_{Y^*_{\omega_T}} \\
& \leq \frac{1}{2} C_4 C_5 \Vert (\director_*, \phi_*) - (\director_h, \phi_h) \Vert_{1, \omega_T},
\end{align*} 
where $C_5$ is given by the value of the restriction of the norm $\Vert D \mathcal{P}(\director_*, \phi_*) \Vert_{\mathcal{L}(X_0, Y^*)}^{-1}$ from Proposition \ref{nonlinearErrorEstimation} to $X_{\omega_T} \subset X_0$ and $Y^*_{\omega_T} \subset Y^*$, the subspaces of $X_0$ and $Y^*$ limited to functions supported on $\omega_T$. Taking $C_e = \frac{1}{2} C_4 C_5$ proves \eqref{efficiency}.
\end{proof}

The results of Lemma \ref{cutoffinequalities} are equally applicable to meshes composed of quadrilateral or simplicial elements, as noted in \cite[Remark 3.5]{Verfurth2}. Thus, the results of this section extend to either type of mesh, satisfying equivalent conditions.

Using a similar approach for the Lagrange multiplier formulation yields a related operator associated with the first-order optimality conditions that includes terms associated with the Lagrange multiplier but excludes the $\zeta$ component. Addressing the Lagrange multiplier terms in the same manner as \cite{Verfurth2} for the pressure-related parts of the estimator corresponding to the stationary, incompressible Navier-Stokes equations produces a related, element-wise estimator,
\begin{align*}
\Theta_T &= \Bigg \{ h_T^2 \left( \Vert \vec{p}_0 \Vert_{0, T}^2 + \Vert q \Vert_{0, T}^2 \right) + \Vert  \director_h \cdot \director_h - 1\Vert^2_{0, T} \\
& \hspace{1in} + \sum_{E \in \mathcal{E}(T) \cap \mathcal{E}_{h, \Omega}} h_E \left ( \Vert \hat{\vec{p}} \Vert_{0, E}^2 + \Vert \hat{q} \Vert_{0, E}^2 \right) \Bigg \}^{1/2}, 
\end{align*}
for $T \in \triangulation$ where $\vec{p}_0 = \vec{p} + \lambda_h \director_h$, with $\zeta = 0$ in $\vec{p}$. 

As discussed in \cite{Emerson6}, there are unique theoretical challenges in extending the reliability and efficiency theory established above to the Lagrange multiplier system. Specifically, for continuum solution triplets $(\director_*, \phi_*, \lambda_*)$ satisfying the unit-length constraint, $\lambda$ may be freely perturbed and the triplet remains a solution. While a number of the theoretical results above are extendable to the Lagrange multiplier estimator using similar techniques to those of \cite{Verfurth2} for the Navier-Stokes equations, the propositions of Section \ref{preliminaries} require special consideration in order to properly address the non-local nature of $\lambda_*$. Though these modifications are the subject of future work, the numerical experiments of Section \ref{numerics} suggest that the Lagrange multiplier estimator performs well as part of AMR schemes. 

\section{Numerical Results} \label{numerics}

In this section, we apply the elastic error estimator of \cite{Emerson6} to problems with analytical solutions on both 2D and 3D domains. These solutions enable both numerical verification of the estimator's theoretical properties and evaluation of a collection of AMR marking schemes. In addition to the elastic estimator simulations, numerical experiments applying the coupled estimator are presented. The inclusion of both electric and flexoelectric coupling, paired with the Dirichlet boundary conditions, limits the availability of non-trivial analytical solutions for these systems. However, the numerical results suggest that the proposed coupled  estimator markedly increases simulation efficiency with comparable or superior performance across a number of metrics compared with uniform mesh refinement.

The algorithm to compute equilibrium solutions to the nonlinear variational systems discussed in Section \ref{model} employs nested iteration (NI) \cite{Starke1}, which begins on a specified coarsest grid. On each NI level, Newton iterations are performed, updating the solution approximation at each step. The stopping criterion for the iterations on each mesh is based on a tolerance of $10^{-4}$ for the approximation's conformance to the first-order optimality conditions in the standard $l_2$ norm. The resulting approximation is then interpolated to a finer grid, where Newton iterations continue. For each iteration an incomplete Newton correction is performed such that for a given iterate $\vec{u}_k$, the next Newton iterate is given by $\vec{u}_{k+1} = \vec{u}_k + \alpha \delta \vec{u}_h$, where $\alpha \leq 1$. While more sophisticated techniques exist \cite{Emerson3}, this simple approach effectively encourages strict adherence to the unit-length constraint manifold. The damping parameter, $\alpha$, begins at $0.2$ and increases by $0.2$ at each level of NI, to a maximum of $1.0$, as the finer features of the solution become increasingly resolved. For more details on the algorithm, see \cite{Emerson2}. The systems are discretized with $Q_2$ elements for components associated with $\director$ and $\phi$ and $Q_1$ elements for computations involving $\lambda$. Finally, the same non-dimensionalization parameters used in \cite{Emerson2} are applied.

On each level, AMR has three stages to produce the next finer mesh:
\begin{equation*}
\text{Estimate} \rightarrow \text{Mark} \rightarrow \text{Refine}.
\end{equation*}
For each $T \in \mathcal{T}_H$, the local estimator $\Theta_T$ is computed with respect to the coarse approximate solution $\vec{u}_H$. Elements of $\mathcal{T}_H$ are then marked for refinement through one of three strategies. Let $0 < \nu < 1$. In the simplest method, referred to throughout as ``fixed," a constant ratio, $\nu$, of coarse mesh cells, sorted by largest $\Theta_T$ value, are flagged. With $\Theta_M$ denoting the largest value of $\Theta_T$ on the coarse level, the second approach, introduced in \cite{Gui1, Gui2, Gui3} and termed ``bandwidth" here, marks a cell $T$ if $\Theta_T \geq (1 - \nu)\Theta_M$. The final method employed is Dorfler marking \cite{Dorfler1}, where $T \in \mathcal{T}_H$ is flagged if it is part of a minimal subset $\hat{\mathcal{T}}_H \subset \mathcal{T}_H$ such that $\sum_{T \in \hat{\mathcal{T}}_H} \Theta_T^2 \geq (1- \nu) \sum_{T \in \mathcal{T}_H} \Theta_T^2$. Any marked cells are refined through bisection to produce the next NI mesh. The grid management, discretizations, and adaptive refinement computations are implemented with the widely used \emph{deal.II} finite-element library \cite{BangerthHartmannKanschat2007}. 

The simulations here utilize meshes with rectangular elements. Therefore, adaptive refinement leads to the existence of hanging nodes. These nodes are dealt with in a standard way by constraining their values with the neighboring regular nodes to maintain continuity along the boundary. Additionally, a $1$-irregular mesh is maintained such that the number of hanging nodes on an edge is at most one. Finally, the theory developed in preceding sections assumes that the studied meshes satisfy the admissibility property. This assumption is valid for the coarsest mesh but, with the introduction of hanging nodes, no longer holds after the first AMR stage. While mesh discretizations employing simplices can maintain admissibility with adaptivity, grids composed purely of rectangular elements cannot. Thus, following the first level of refinement, the error estimator is applied heuristically.

In order to compare efficiency across different refinement techniques, an approximate work unit (WU) is calculated for each simulation. Assuming the presence of solvers that scale linearly with the number of non-zeros in the matrix, a WU is defined as the sum of the non-zeros in the discretized Hessian for each Newton step over the NI hierarchy divided by the number of non-zeros in a reference fine-grid Hessian. Below, the reference Hessian belongs to the finest level of uniform refinement, when available, or the finest mesh from the ``fixed" flagging strategy with largest $\nu$. Thus, a WU roughly approximates the work required by any full NI hierarchy in terms of assembling and solving a single linearization step for the reference Hessian when optimally scaling solvers are applied. While the linear systems here are solved with simple LU decomposition, the reported WUs provide a best-case scaling for comparing the work required between refinement strategies.

\subsection{2D Elastic System Results}

The simulations in this section consider a unit-square domain with $K_i = 1$, $i = 1,2,3$. In this equal Frank constant case, the minimization reduces to a kind of harmonic mapping problem with known analytical solutions. Specifically, we examine an example from the family of solutions derived in \cite{Hu2} of the form
\begin{align} \label{2dHarmonicSol}
\director_* = (\sin \theta, \cos \theta, 0), && \theta = -4.5 \log \left(\sqrt{(x-0.5)^2 + (y + 0.1)^2} \right).
\end{align}
For all simulations, NI begins on a $32 \times 32$ coarse grid and, where applicable, the penalty parameter is $10^8$. For uniform refinement, five refinement levels are used, while experiments applying AMR continue until the number of fine-mesh degrees of freedom (DOFs) is larger than the finest uniform grid.

\begin{figure}[h!]
\begin{minipage}[t]{0.33 \textwidth}
\vspace{0pt}
\begin{subfigure}{0.99 \textwidth}
\includegraphics[scale=0.215, left]{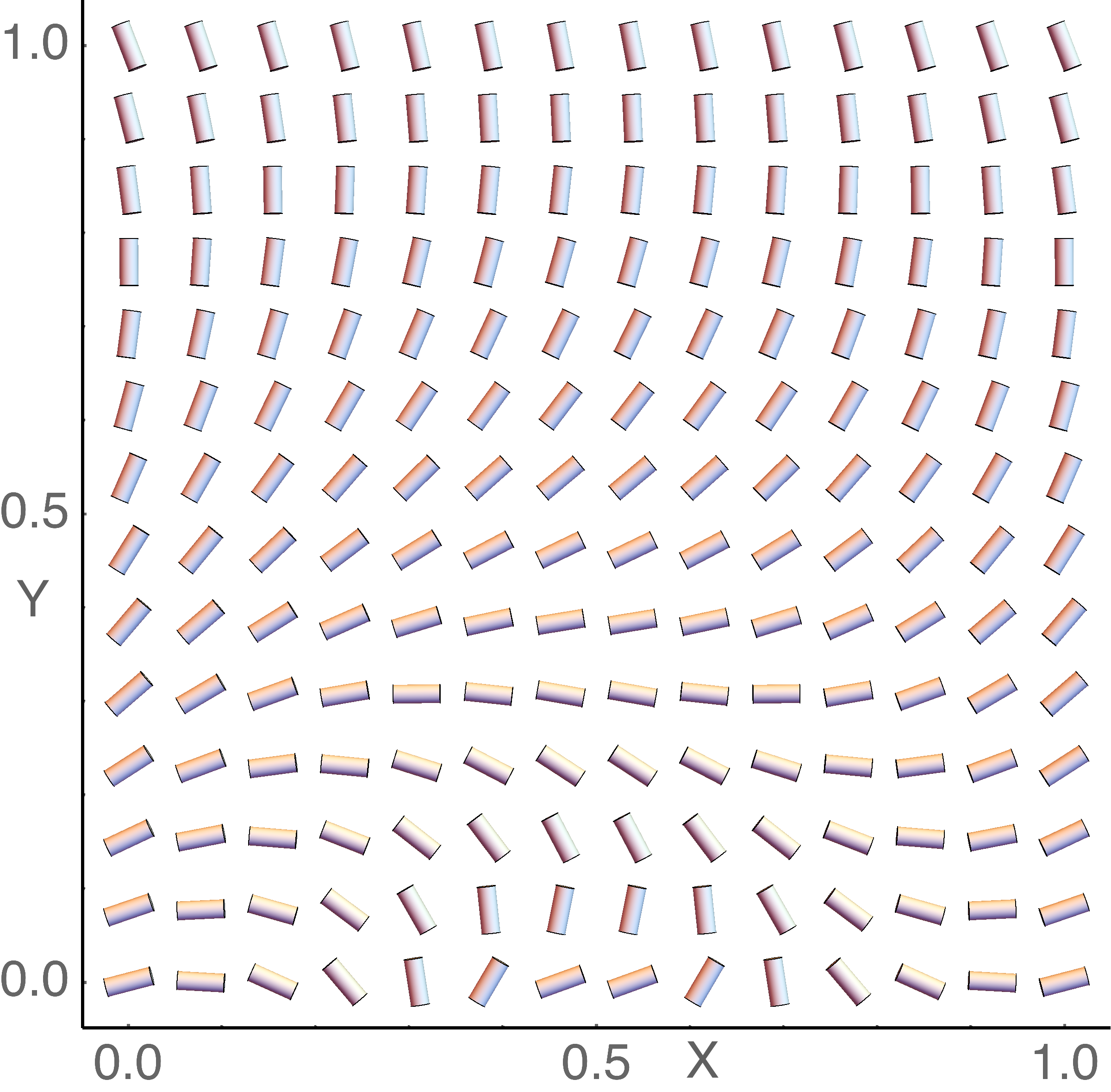}
  \caption{}
  \label{2DRefinementPlots:left}
\end{subfigure}
\end{minipage}
\begin{minipage}[t]{0.64 \textwidth}
\vspace{0pt}
\begin{subfigure}{0.99 \textwidth}
	\begin{subfigure}{0.53 \textwidth}
		  \includegraphics[scale=.073, left]{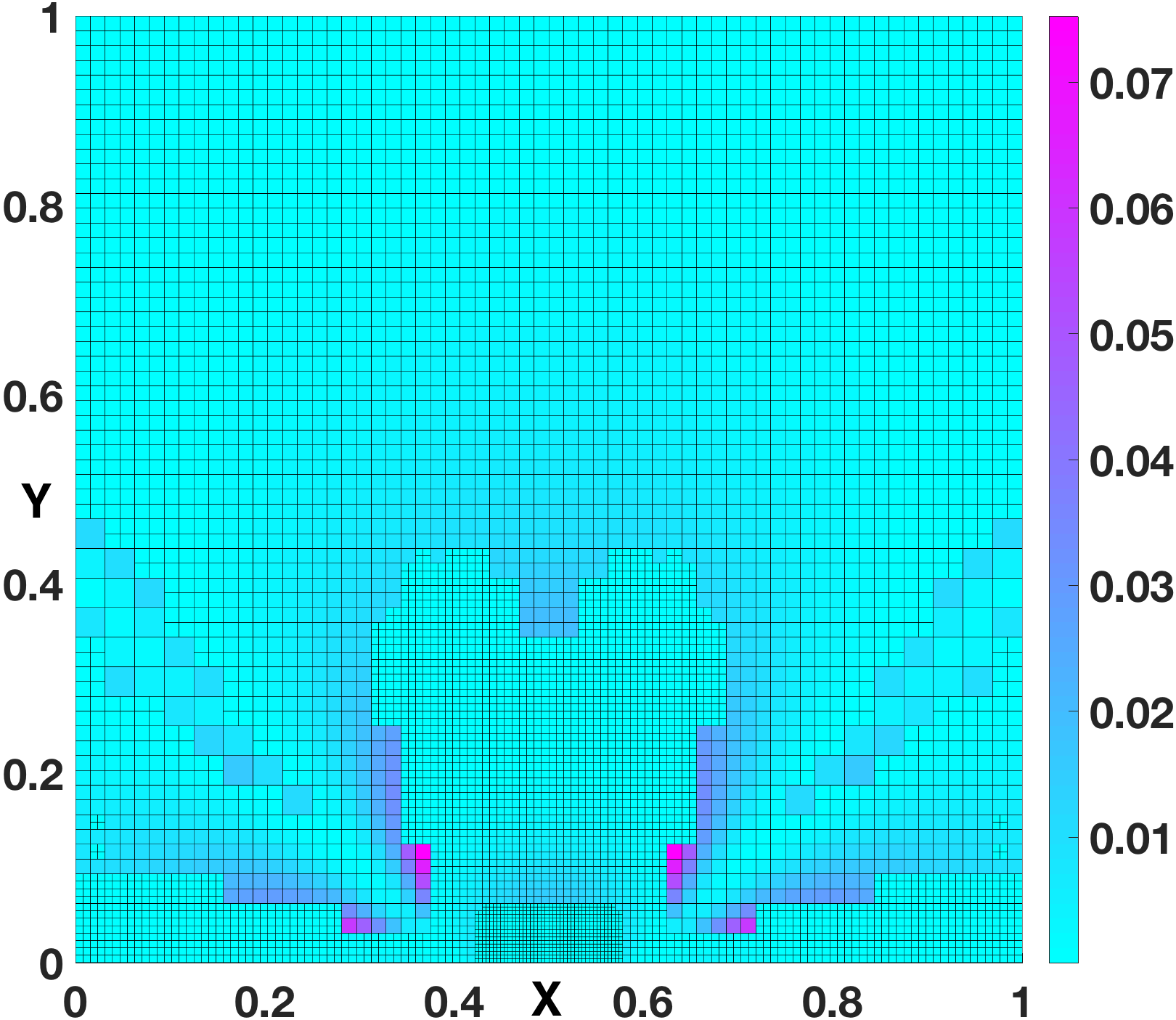}
	\end{subfigure}
	\begin{subfigure}{0.46 \textwidth}
		\includegraphics[scale=.073, left]{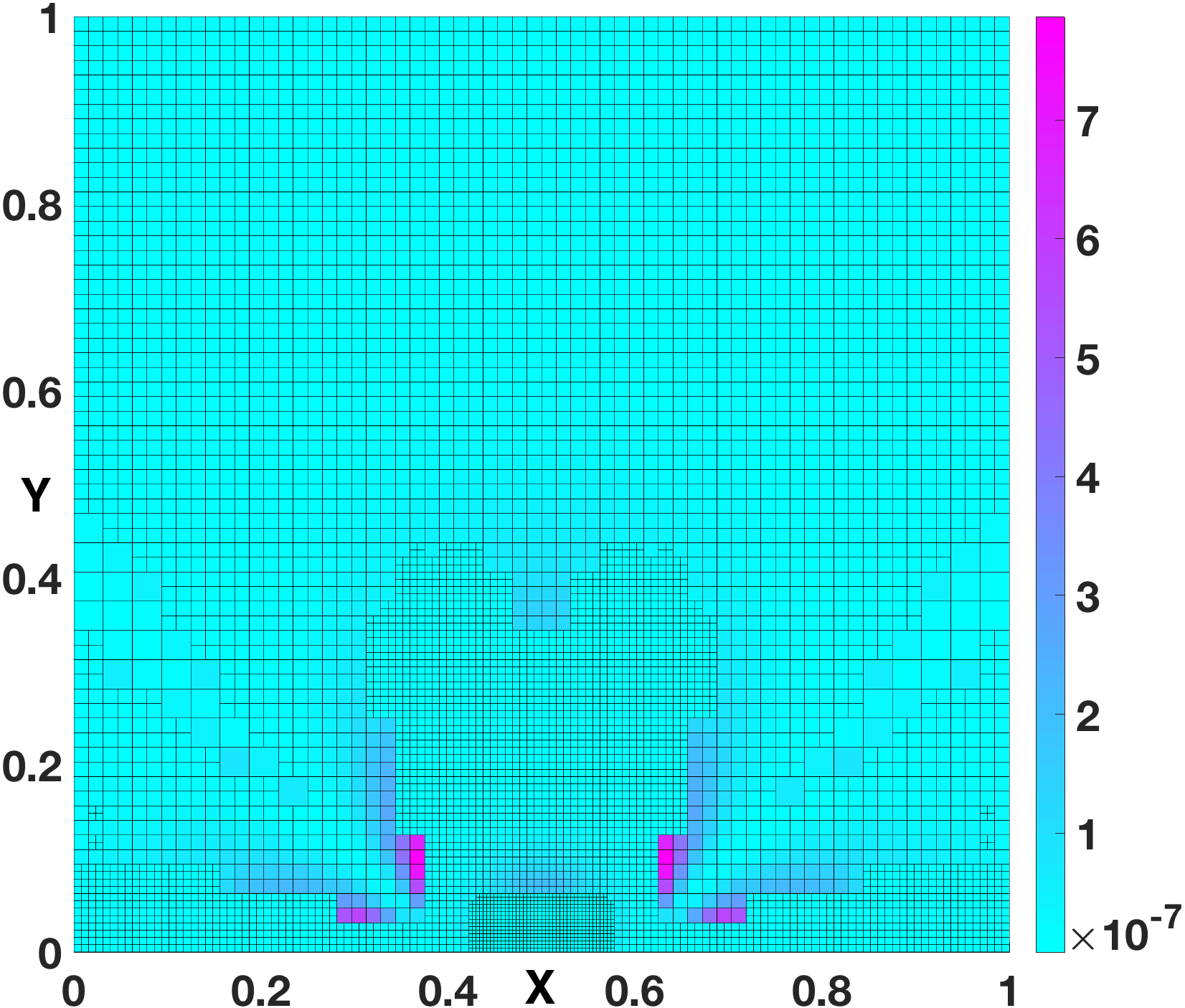}
	\end{subfigure}
  \caption{}
  	  \label{2DRefinementPlots:center1}
  \begin{subfigure}{0.99 \textwidth}
	\begin{subfigure}{0.53 \textwidth}
		  \includegraphics[scale=.073, left]{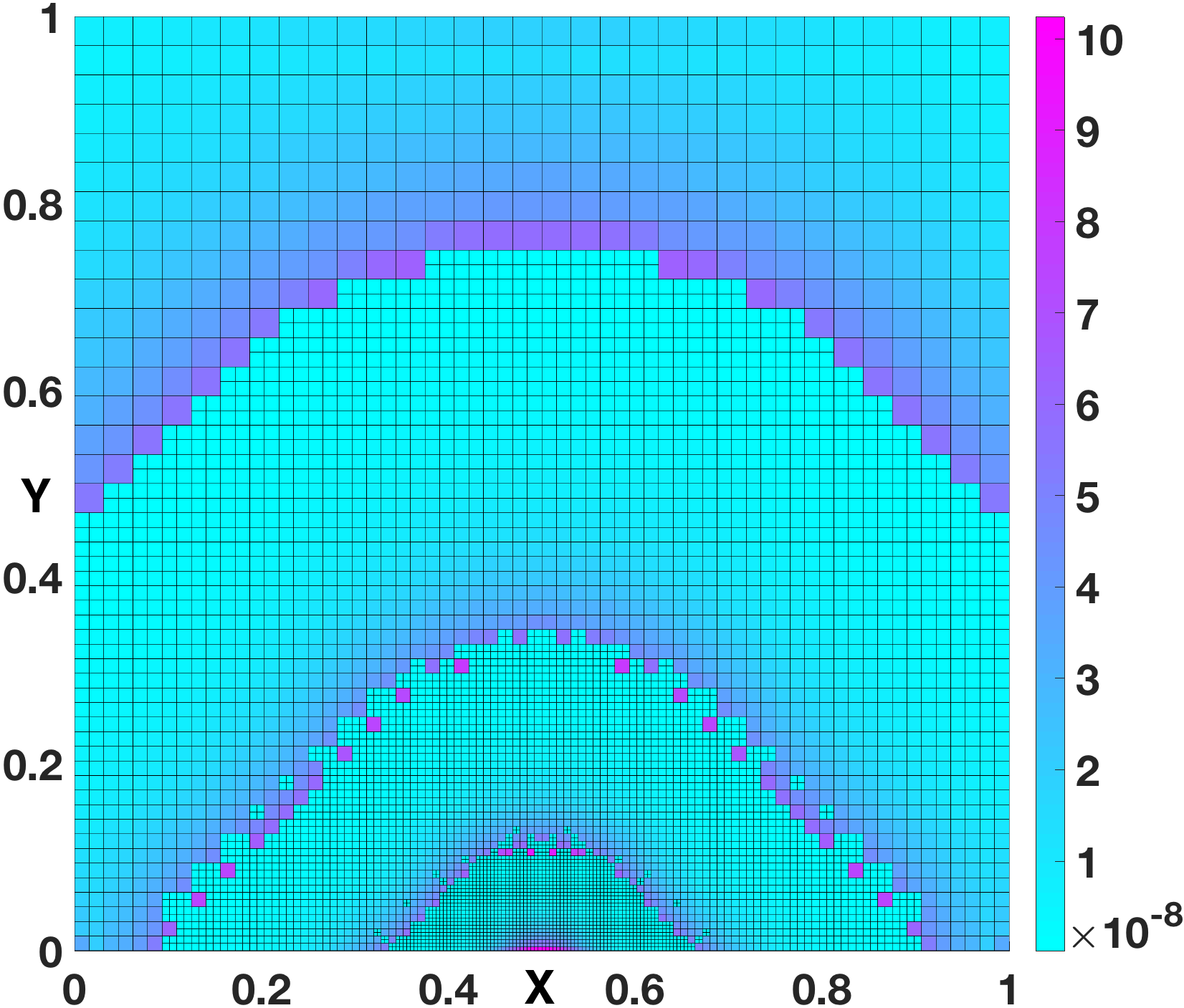}
	\end{subfigure}
	\begin{subfigure}{0.46 \textwidth}
		\includegraphics[scale=.073, left]{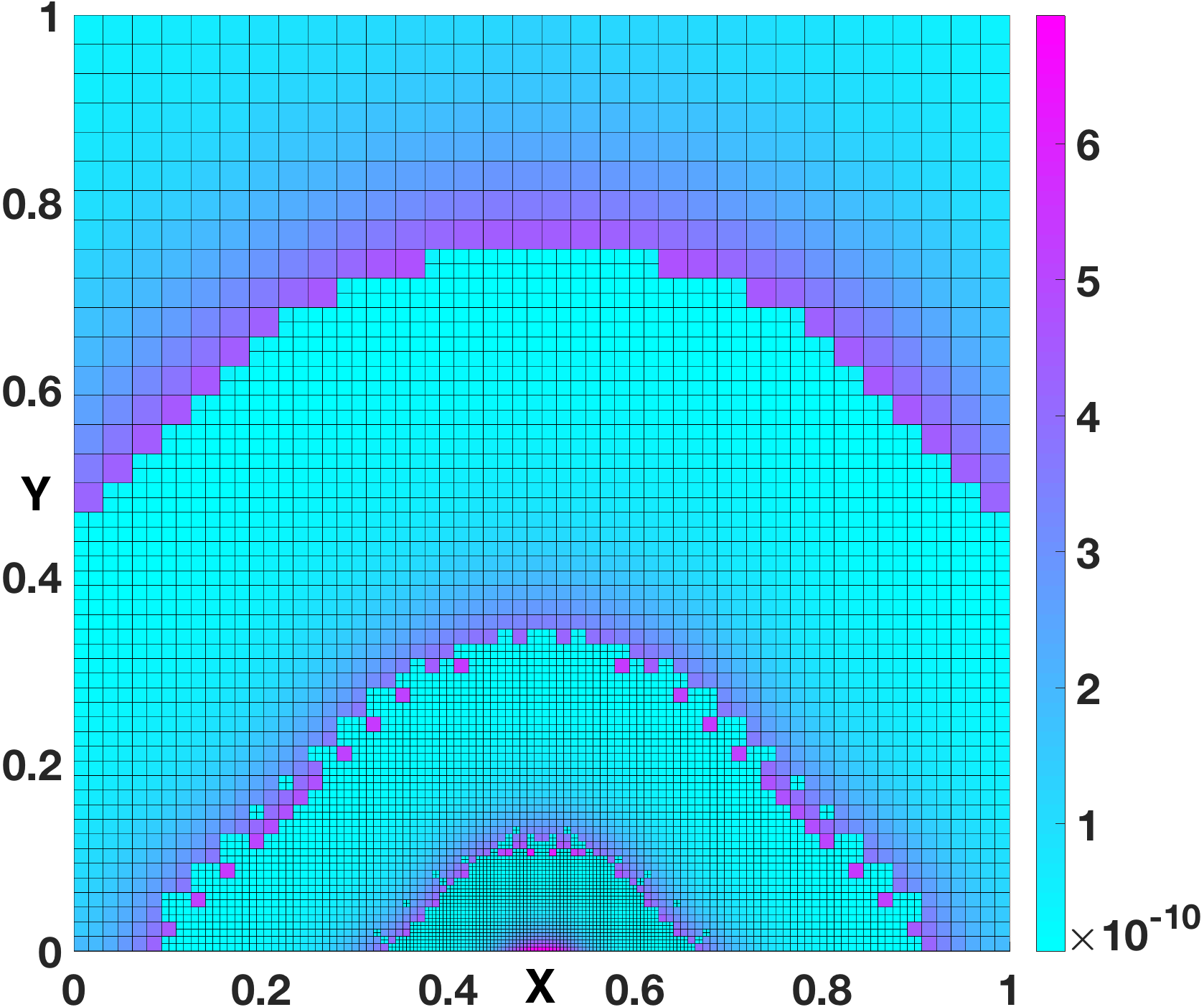}
	\end{subfigure}
  \caption{}
\end{subfigure}
	  \label{2DRefinementPlots:center2}
\end{subfigure}
\end{minipage}
\caption{\small{(\subref{2DRefinementPlots:left}) Solution on the finest adaptively refined mesh for the penalty method with bandwidth flagging (restricted for visualization). For the penalty method (\subref{2DRefinementPlots:center1}) and Lagrangian formulation (\subref{2DRefinementPlots:center2}),  a comparison of $\Theta_T^2$ (left) on each cell with $\Vert \director_* - \director_h \Vert_{1, T}$ (right)}}
\label{2DRefinementPlots}
\end{figure}
\vspace{-0.15cm}

As an example, the solution computed by the penalty method using the bandwidth flagging scheme with $\nu = 0.9$ is shown in Figure \ref{2DRefinementPlots}\subref{2DRefinementPlots:left}. The configuration is qualitatively indistinguishable from the analytical solution of \eqref{2dHarmonicSol} and matches the true free energy of $8.717$. Figures \ref{2DRefinementPlots}\subref{2DRefinementPlots:center1} and \subref{2DRefinementPlots:center2} exhibit a comparison of the local estimator, $\Theta_T^2$, on each cell to the analytical approximation error in the $H^1$-norm for the Lagrange multiplier and penalty methods, respectively, with bandwidth flagging after three AMR levels. In both cases, there is good agreement between areas of elevated estimator and error values. This correspondence is observed across each marking scheme and implies that the estimator is highly effective at identifying regions where additional refinement most effectively reduces approximation error. 

After establishing an accurate estimate of the local error for a computed solution, the method used to tag cells for refinement becomes an important component in generating near optimal discretizations. The graphs of Figure \ref{2DErrorAndRefinement} present the results of applying the three different flagging schemes, compared with uniform refinement for both constraint enforcement approaches. The $\nu$ values are set to the optimal value observed for each scheme. In each of the figures, it is clear that all marking approaches significantly outperform uniform refinement. Such behavior is observed even for non-optimal values of $\nu$. Figures \ref{2DErrorAndRefinement}\subref{2DErrorAndRefinement:left1} and \ref{2DErrorAndRefinement}\subref{2DErrorAndRefinement:left2} show reduction of the estimator and approximation error as a function of cells in the discretization for the penalty and Lagrange multiplier approaches, respectively. Note that the estimator remains an upper bound on the approximation error throughout the NI hierarchies, with both quantities showing similar reduction profiles as refinement progresses. The graphs indicate that, for early refinement levels, bandwidth and Dorfler flagging are more efficient than the fixed approach. However, with additional refinement the methods become comparable. A portion of this confluence is likely due to a combination of more uniformly distributed error and the $1$-irregularity mesh constraint forcing larger numbers of cells to be refined than tagged by either bandwidth and Dorfler, thereby slightly reducing their efficiency. Simplicial meshes that maintain regularity with refinement could show even better performance with these two flagging techniques. 

\begin{figure}[h!]
\begin{subfigure}{0.49 \textwidth}
\includegraphics[scale=0.232, center]{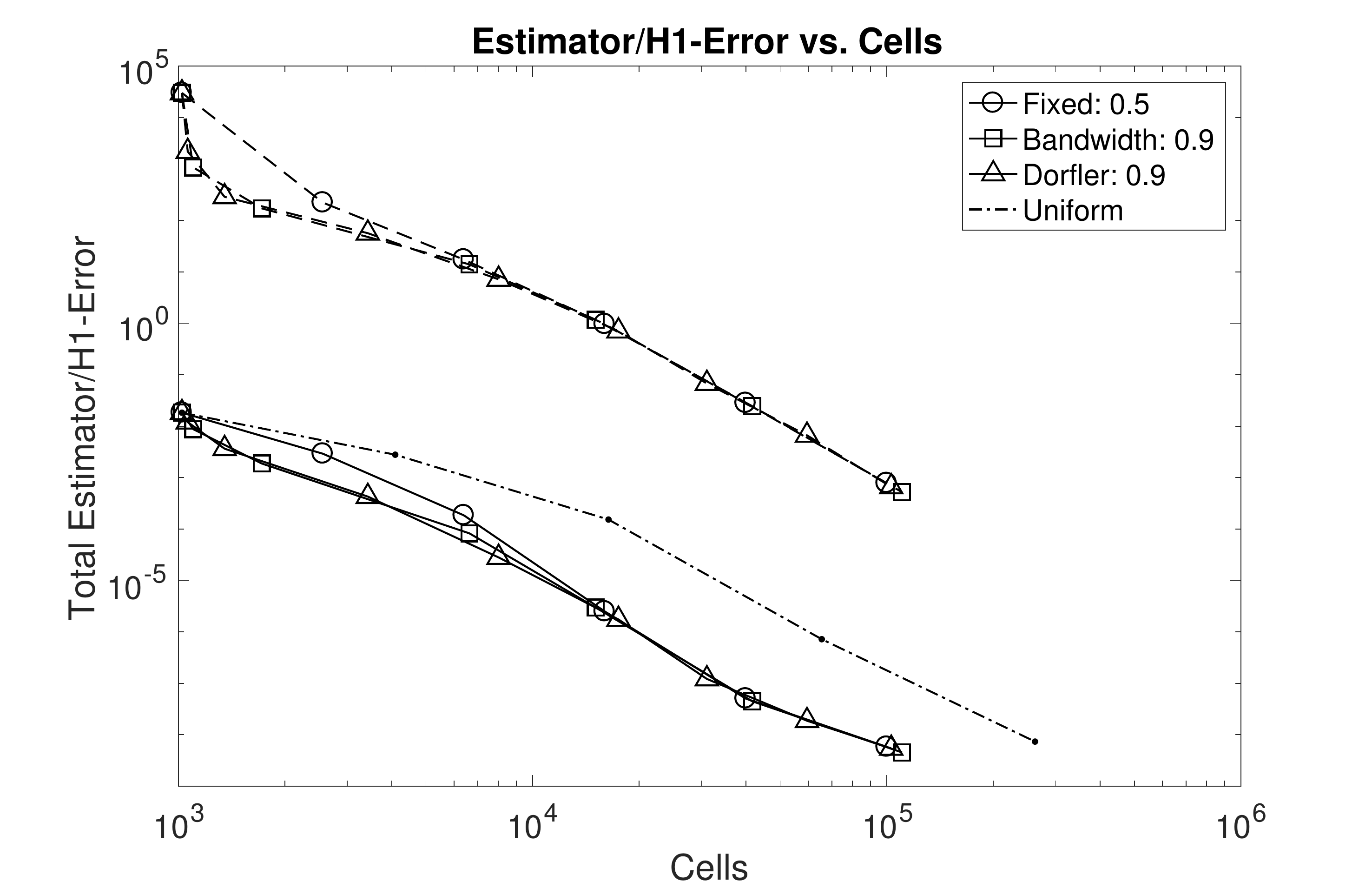}
  \caption{}
  \label{2DErrorAndRefinement:left1}
\end{subfigure}
\begin{subfigure}{0.49 \textwidth}
	\includegraphics[scale=0.232, center]{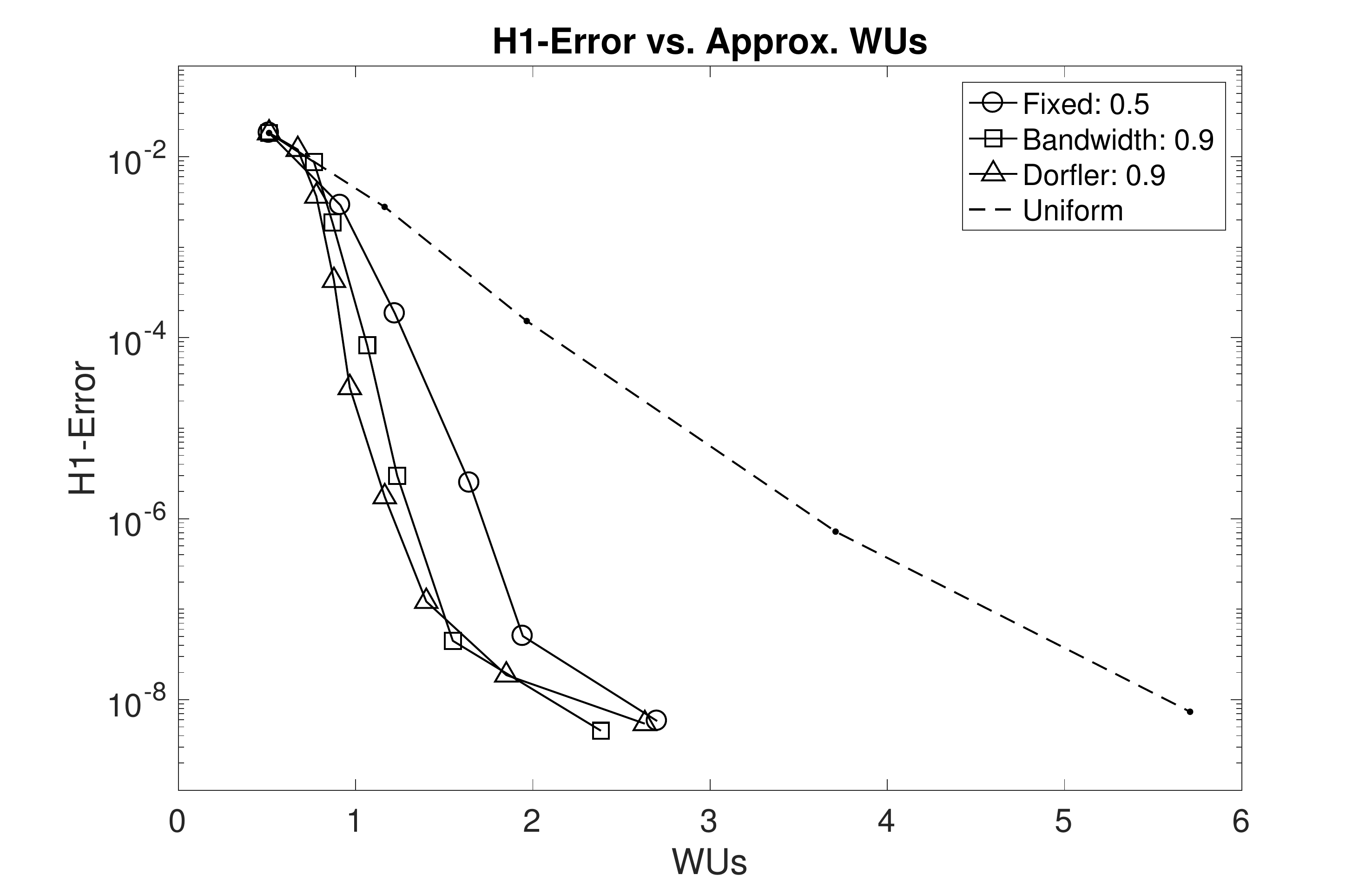}
  \caption{}
  \label{2DErrorAndRefinement:right1}
\end{subfigure}
\raggedleft
\begin{subfigure}{0.49 \textwidth}
\includegraphics[scale=0.232, center]{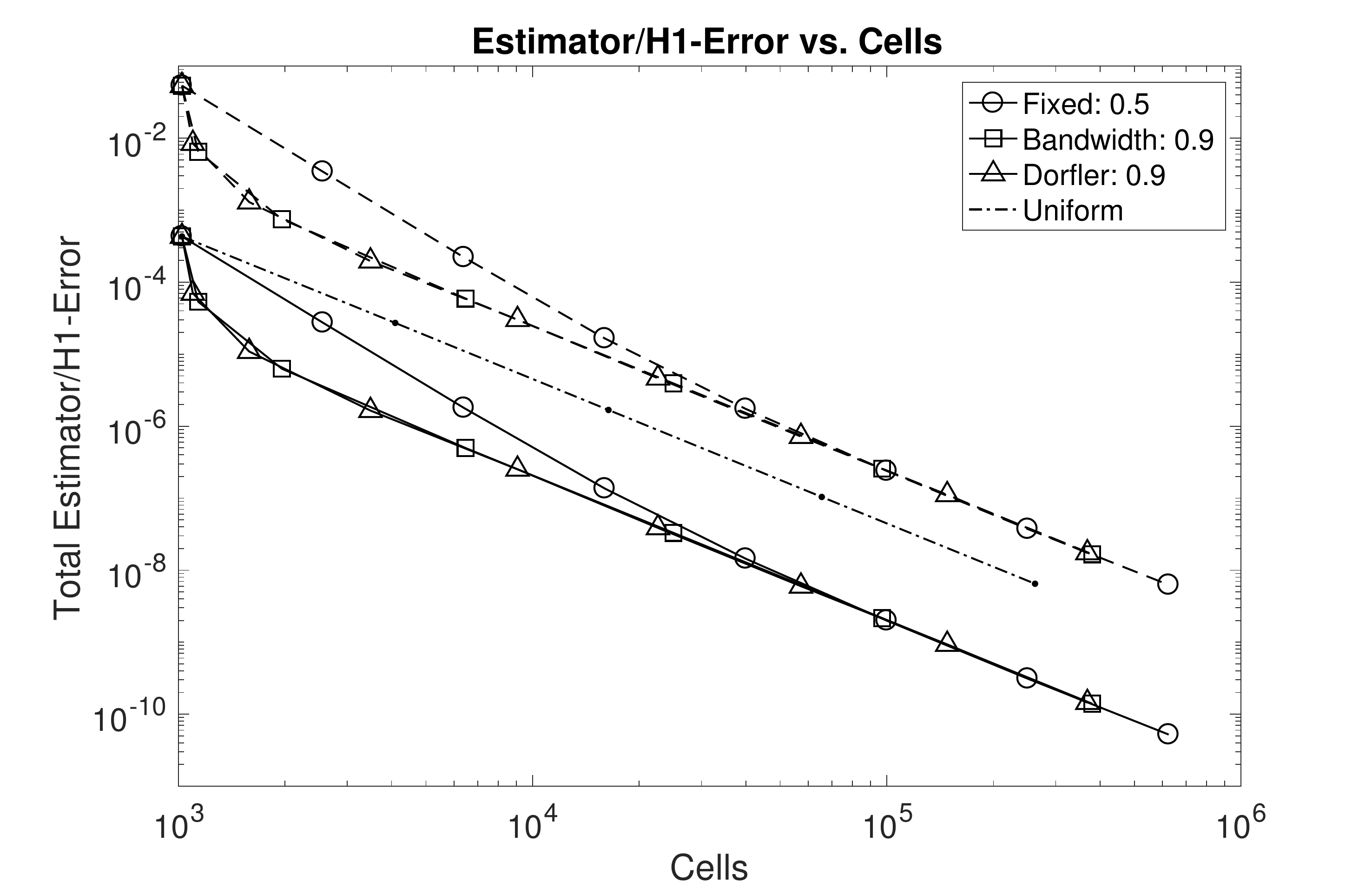}
  \caption{}
  \label{2DErrorAndRefinement:left2}
\end{subfigure}
\raggedright
\begin{subfigure}{0.49 \textwidth}
	\includegraphics[scale=0.232, center]{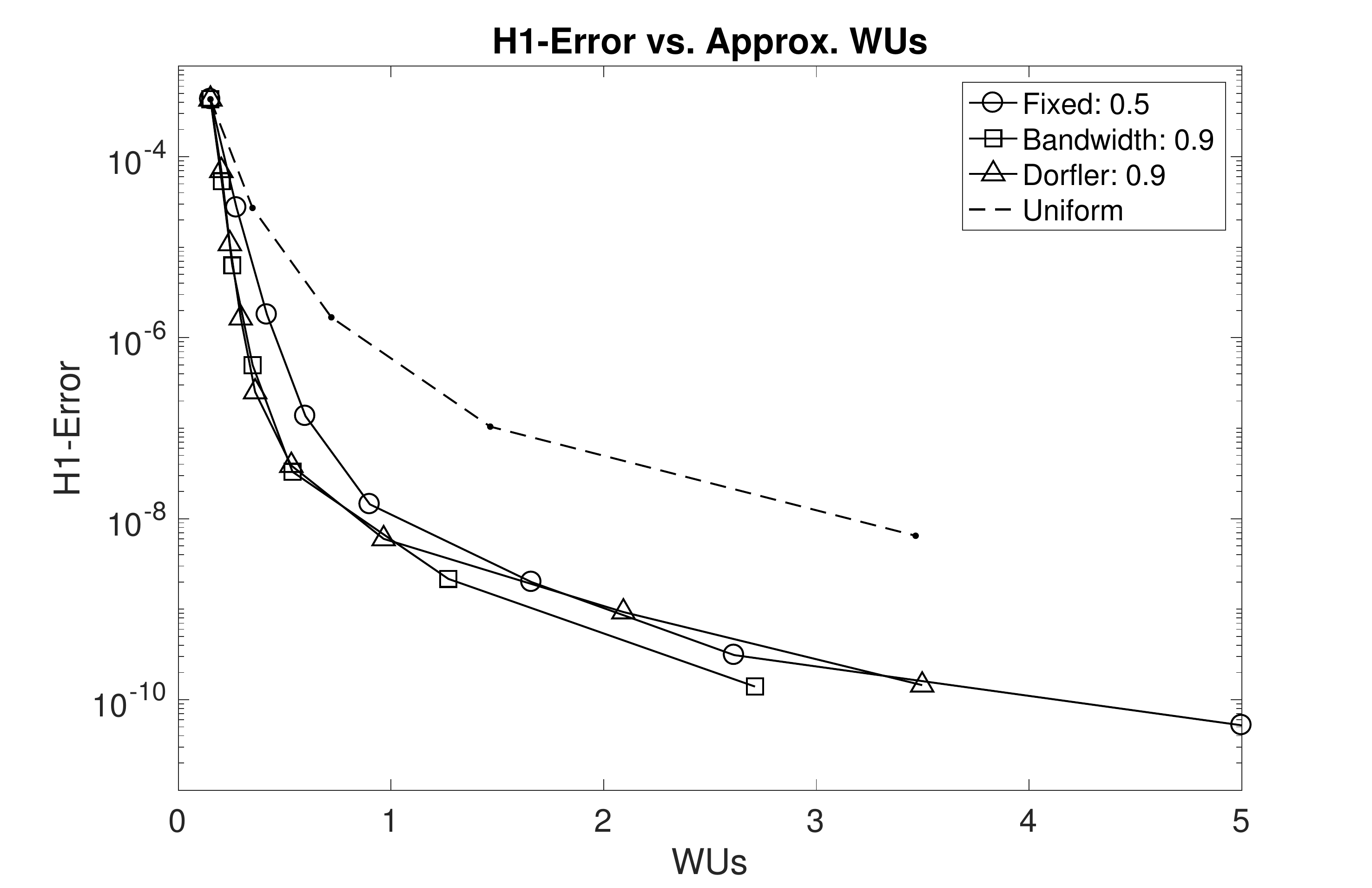}
  \caption{}
  \label{2DErrorAndRefinement:right2}
\end{subfigure}
\caption{\small{
(\subref{2DErrorAndRefinement:left1}, \subref{2DErrorAndRefinement:left2}) Reduction in the total estimator (dashed lines) and $H^1$ approximation error (solid lines) as a function of mesh elements at each level of refinement. (\subref{2DErrorAndRefinement:right1}, \subref{2DErrorAndRefinement:right2}) Reduction in $H^1$-error as a function of approximate WUs. The top row corresponds to the penalty method while the bottom is associated with the Lagrangian formulation.}}
\label{2DErrorAndRefinement}
\end{figure}
\vspace{-0.15cm}

The reduction of approximation error as a function of WUs is displayed in Figures \ref{2DErrorAndRefinement}\subref{2DErrorAndRefinement:right1} and \ref{2DErrorAndRefinement}\subref{2DErrorAndRefinement:right2}. For the Lagrange multiplier method, AMR achieves at least two orders of magnitude better error while consuming the same or fewer WUs compared to the uniformly refined meshes. In the penalty case, uniformly refined mesh required twice the WUs to reach an equivalent error. In either case, the bandwidth tagging approach performed somewhat better than the Dorfler scheme. Note that for coarser mesh with the penalty method, error and estimator reduction is slightly less uniform. The large penalty parameter for this problem strongly influences the error estimator in regions with heavier violations of the unit-length constraint. Therefore, the sharpness of the estimator is reduced, in practice, until the unit-length constraint is well satisfied. Finally, Figure \ref{2DErrorDistributions} displays illustrative examples of the difference in the distribution of estimator and error quantities on the uniform coarsest and adaptively refined finest meshes for the Lagrange multiplier method. Each point represents the fraction of total estimator or error value contained in the corresponding percentage of mesh cells, ordered by each cell's contribution to the quantity. In \cite{Gui1, Gui2, Gui3}, it is shown that near optimal discretizations are achieved, for one-dimensional problems, by equally distributing error across mesh elements. It is believed that this result extends to higher dimensions. Figure \ref{2DErrorDistributions}\subref{2DErrorDistributions:left1} suggests that for nearly uniform AMR, where $90\%$ of elements are refined at each level, very little progress towards equal distribution of the error or estimator values is achieved. In contrast, significant improvement in the distribution of both quantities is achieved with more targeted flagging techniques, as seen in Figure \ref{2DErrorDistributions}\subref{2DErrorDistributions:right1} exhibiting results with bandwidth marking.

\begin{figure}[h!]
\begin{subfigure}{0.49 \textwidth}
\includegraphics[scale=0.232, center]{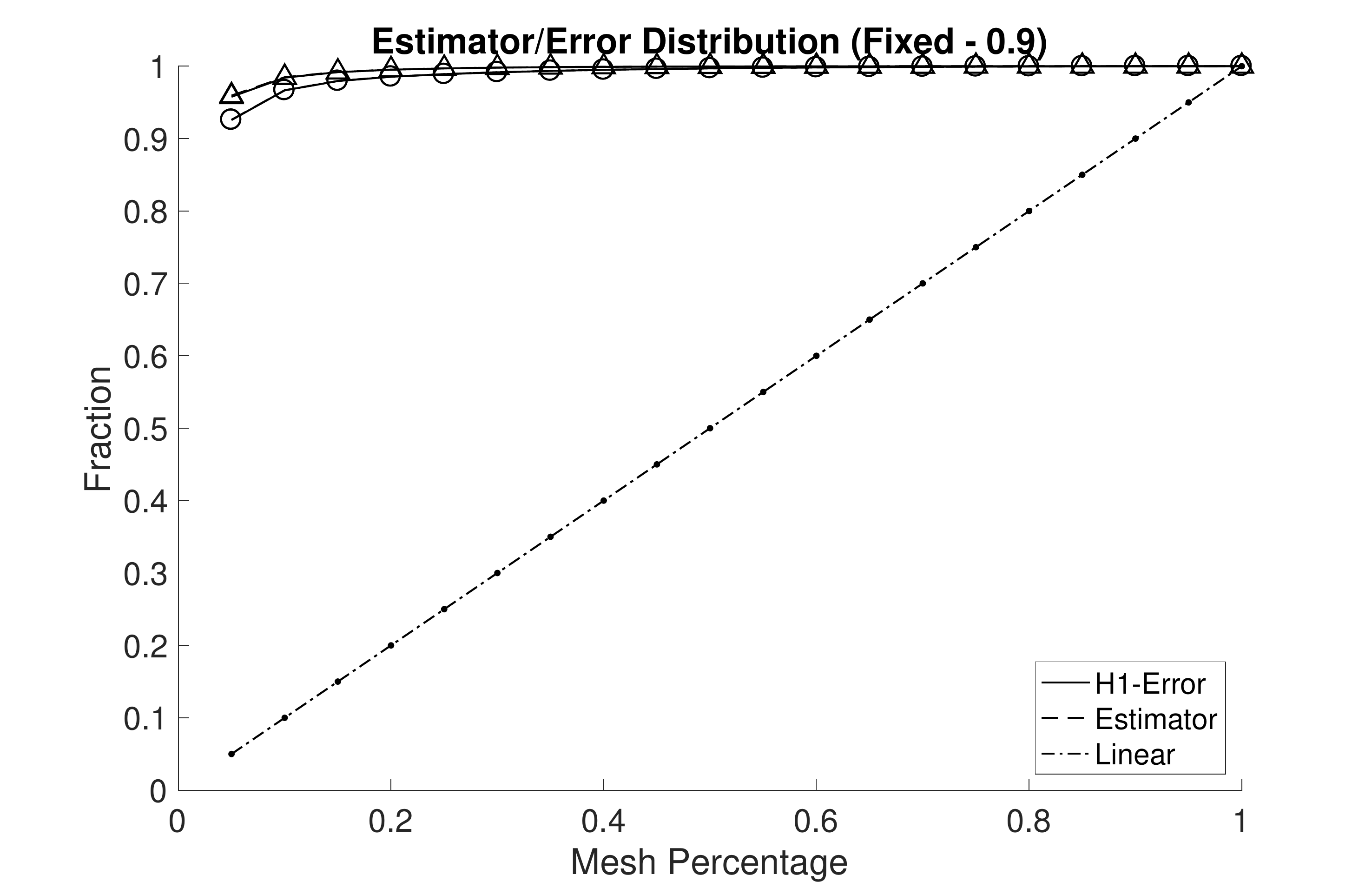}
  \caption{}
  \label{2DErrorDistributions:left1}
\end{subfigure}
\begin{subfigure}{0.49 \textwidth}
	\includegraphics[scale=0.232, center]{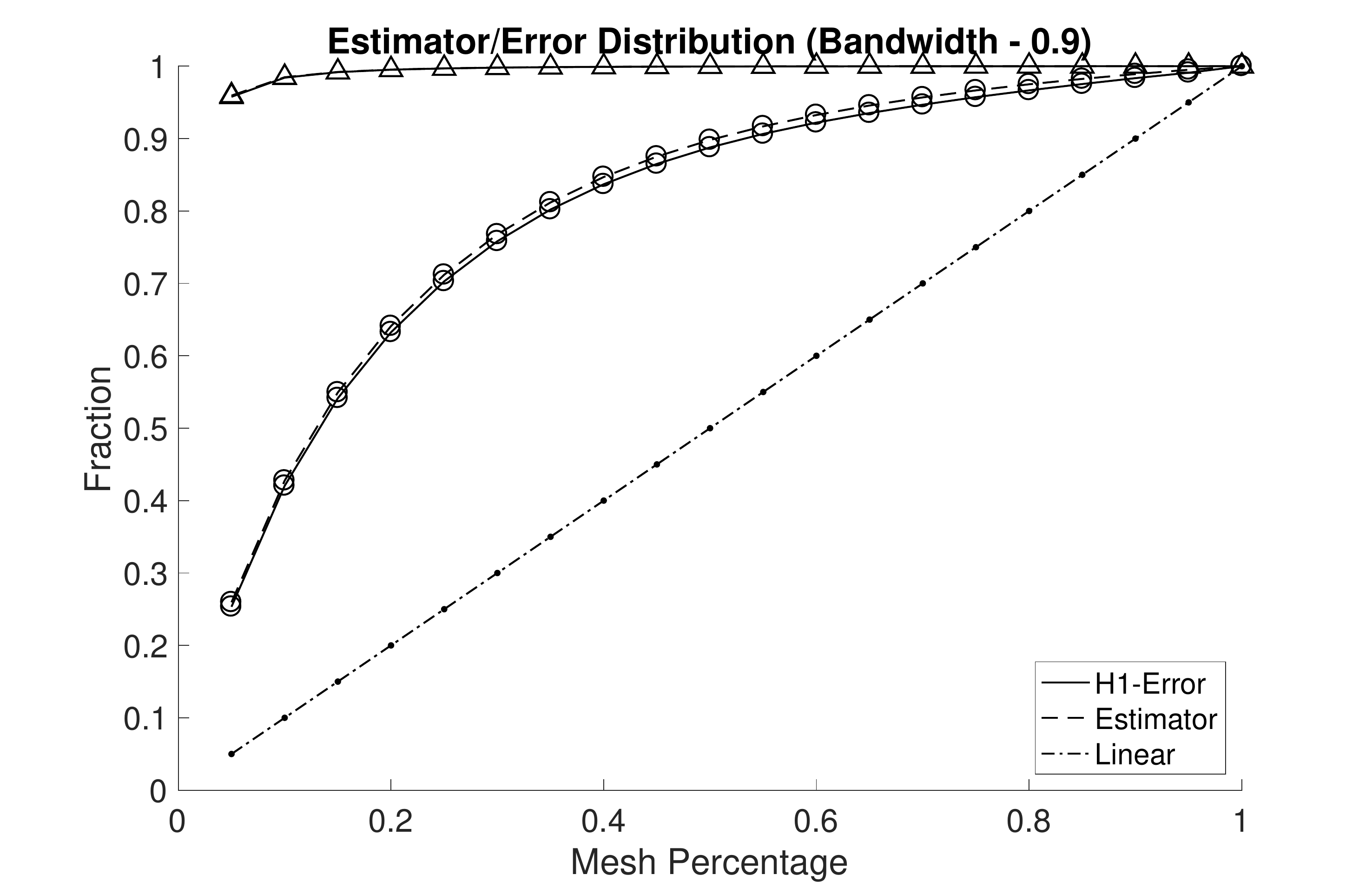}
  \caption{}
  \label{2DErrorDistributions:right1}
\end{subfigure}
\caption{\small{
The distribution of the total estimator value or $H^1$ approximation error across the cells of the coarsest and finest meshes after reaching the Newton stopping criterion for (\subref{2DErrorDistributions:left1}) the fixed marking scheme with $\nu = 0.9$ and (\subref{2DErrorDistributions:right1}) bandwidth AMR using $\nu = 0.9$. Markers $\bigtriangleup$ and $\Circle$ delineate intial and final mesh distributions, respectively.
}}
\label{2DErrorDistributions}
\end{figure}
\vspace{-0.3cm}

\subsection{3D Elastic System Results}

In this section, we examine the performance of AMR for a liquid crystal sample with equal Frank constants confined to a unit cube. Following the construction detailed in \cite{Alouges1, Cohen1}, we consider a subset of solutions of the form $\vec{u}_{\gamma_0}(\vec{x}) = \pi^{-1} \circ \gamma_0 \circ \pi \left( \frac{\vec{x}}{\vert \vec{x} \vert} \right)$. For this example,
\begin{align*}
\pi(x, y, z) &= \left(1 - z\right)^{-1} \left ( x, y \right), \\
\pi^{-1}(x, y) &= \left( 1+x^2+y^2 \right)^{-1} \left( 2x, 2y, x^2 + y^2 - 1 \right), \\
\gamma_0(x, y) &= \left (\frac{x}{x^2+ y^2} + x^2 - y^2,  2xy - \frac{y}{x^2 + y^2}\right).
\end{align*}
A simple shift $w(x, y, z) = (x + 0.2, y + 0.1, z)$ is employed to remove the singularity at the origin yielding an analytical equilibrium solution of the form $\director_* = \vec{u}_{\gamma_0} \circ w$.

For the experiments of this section, NI begins on an $8^3$ mesh and the penalty parameter is $\zeta = 10^6$, where applicable. Due to the rapid growth in problem size with uniform refinement, computations using uniform mesh are not reported. The configurations in Figures \ref{3DSolutionAndRefinement}\subref{3DSolutionAndRefinement:left1} and \ref{3DSolutionAndRefinement}\subref{3DSolutionAndRefinement:right1} are slices at $y = 0.2$ and $z = 0.1$, respectively, of the computed solution using the penalty method and Dorfler AMR with $\nu = 0.9$. On the finest mesh, the calculated free energy of $8.847$ matches that of the analytical solution. Furthermore, Figures \ref{3DSolutionAndRefinement}\subref{3DSolutionAndRefinement:left2} and \ref{3DSolutionAndRefinement}\subref{3DSolutionAndRefinement:right2} show the resulting meshes after four refinement stages for the penalty and Lagrange multiplier methods, respectively, with Dorfler marking. The meshes are overlaid on the corresponding coarse-grid $H^1$ approximation error after the Newton iteration tolerance is reached. Notably, the regions emphasized by the refinement process coincide with the areas of largest error.

\begin{figure}[h!]
\begin{subfigure}{0.49 \textwidth}
\includegraphics[scale=0.23, center]{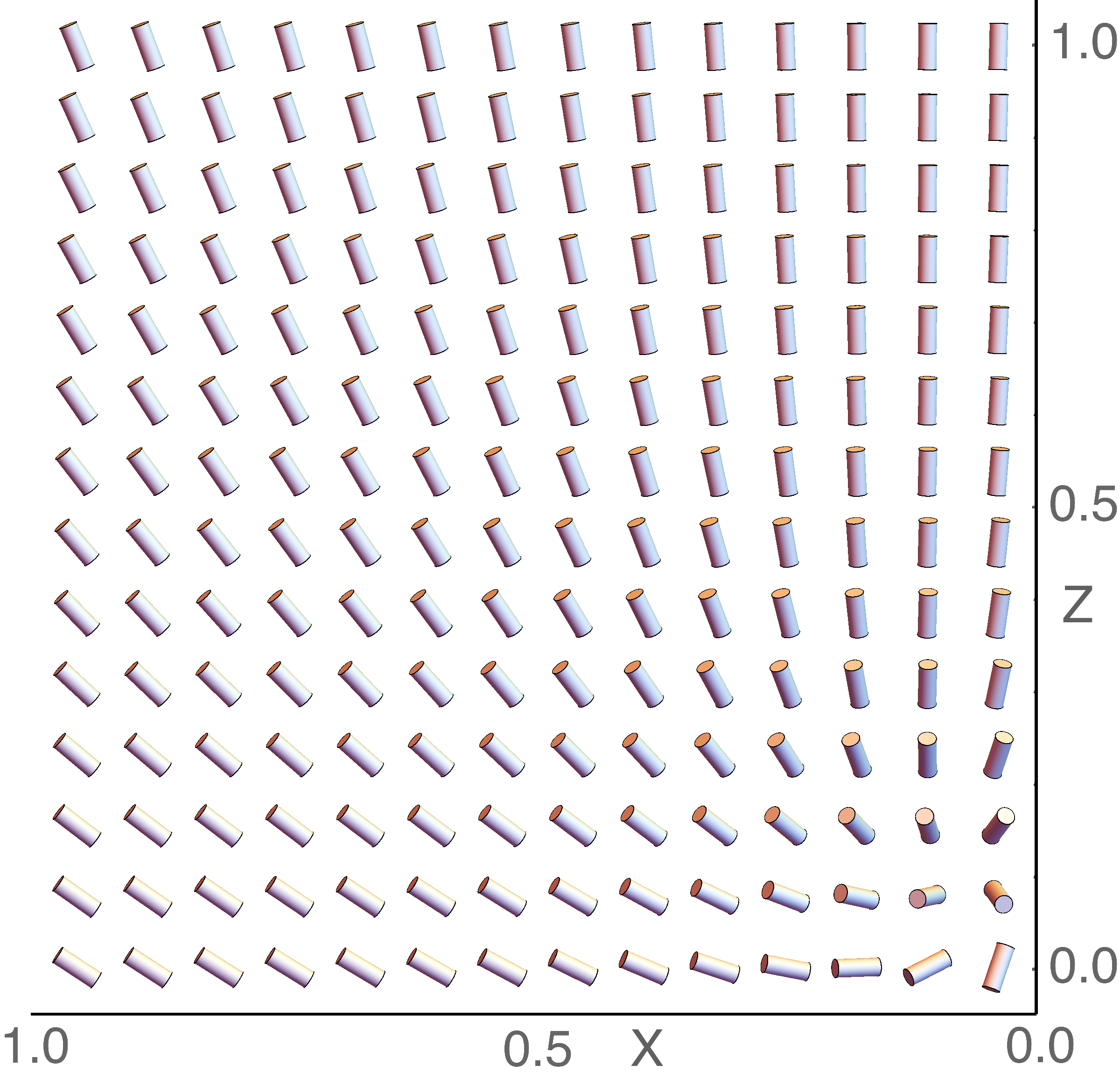}
  \caption{}
  \label{3DSolutionAndRefinement:left1}
\end{subfigure}
\begin{subfigure}{0.49 \textwidth}
\includegraphics[scale=0.23, center]{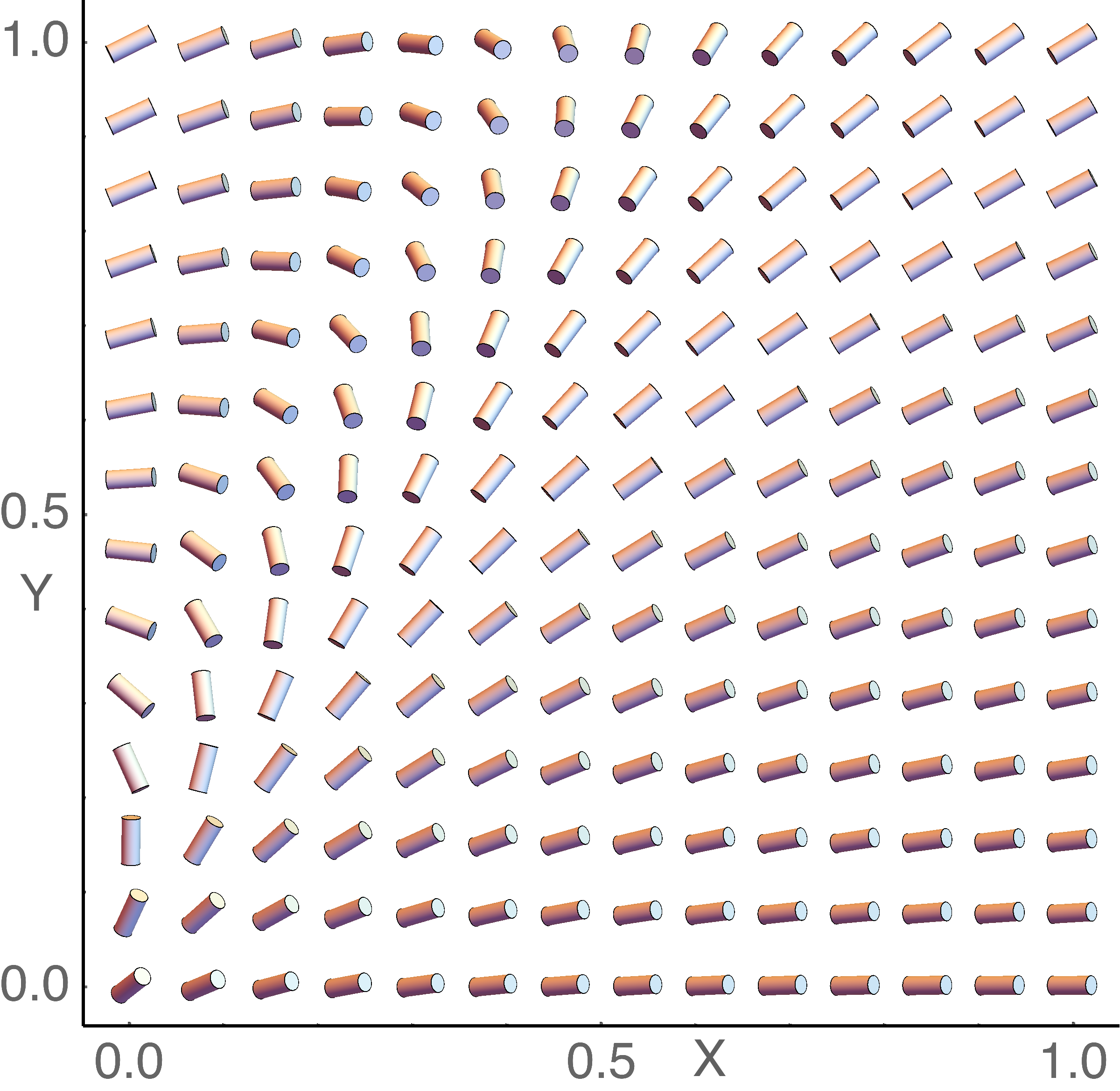}
  \caption{}
  \label{3DSolutionAndRefinement:right1}
\end{subfigure}
\begin{subfigure}{0.49 \textwidth}
	\includegraphics[scale=0.16, center]{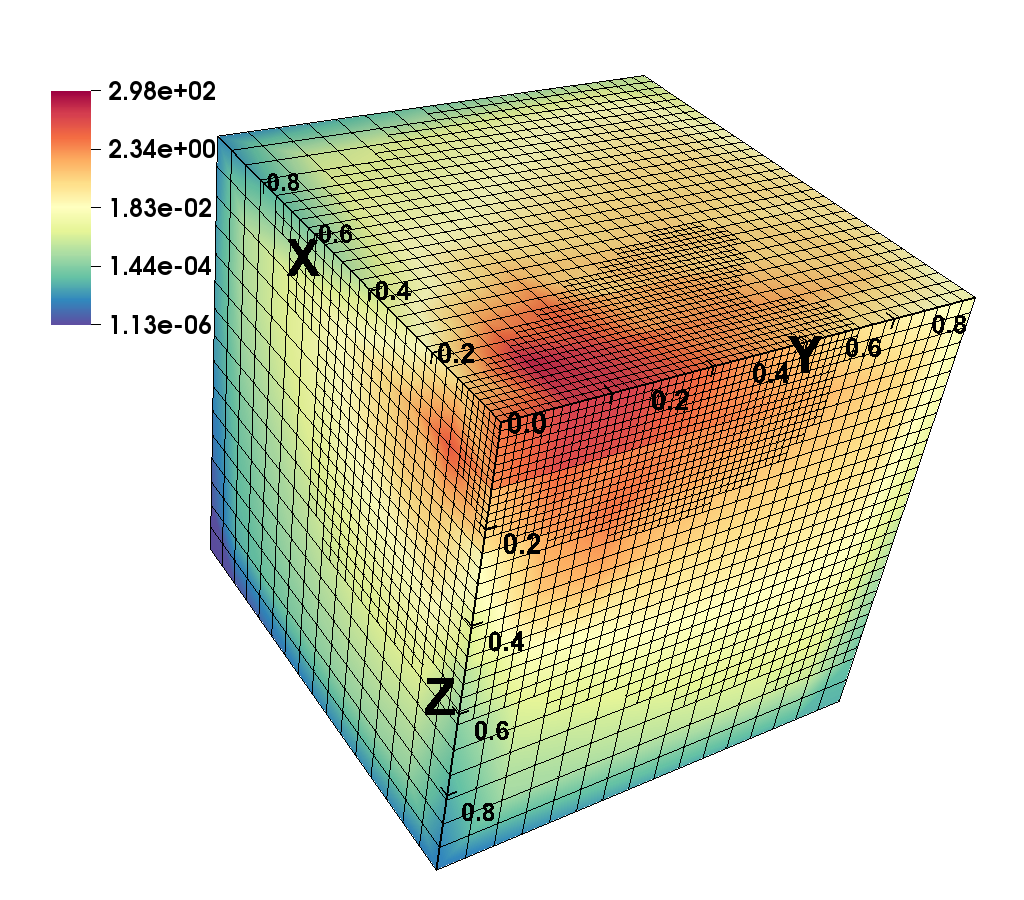}
  \caption{}
  \label{3DSolutionAndRefinement:left2}
\end{subfigure}
\begin{subfigure}{0.49\textwidth}
	\includegraphics[scale=0.16, center]{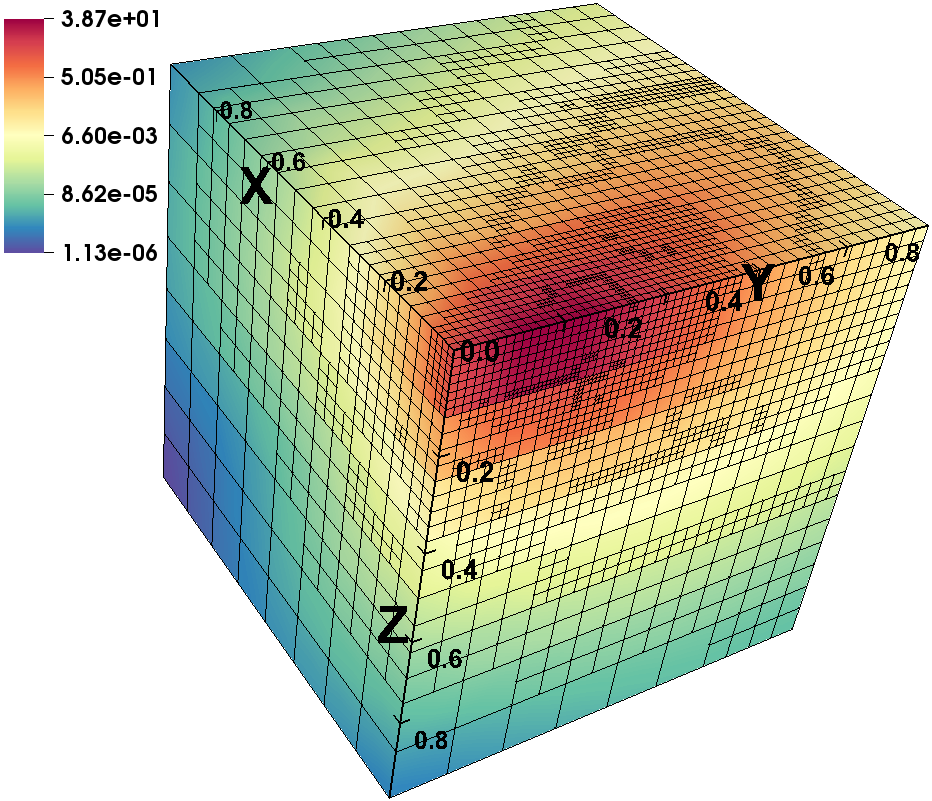}
  \caption{}
  \label{3DSolutionAndRefinement:right2}
\end{subfigure}
\caption{\small{
(\subref{3DSolutionAndRefinement:left1}, \subref{3DSolutionAndRefinement:right1}) Slices at $y = 0.2$ and $z = 0.1$, respectively, of the solution on the finest mesh (restricted for visualization) computed with the penalty method and Dorfler marking. (\subref{3DSolutionAndRefinement:left2}, \subref{3DSolutionAndRefinement:right2}) The resulting AMR patterns for the penalty and Lagrangian formulations after four levels of refinement overlaid on the coarse-grid $H^1$-error after Newton convergence.
}}
\label{3DSolutionAndRefinement}
\end{figure}

Paired with adaptive refinement guided by the proposed error estimators, each flagging approach is highly effective and efficient at reducing the error in the computed solution. The plots in Figures \ref{3DErrorAndRefinement}\subref{3DErrorAndRefinement:left1} and \ref{3DErrorAndRefinement}\subref{3DErrorAndRefinement:right1} display the reduction in overall $H^1$-error for each of the flagging schemes as a function of consumed WUs for the penalty and Lagrange multiplier methods. As in the previous section, optimal values of $\nu$ for each of the flagging methods are shown. In general, the bandwidth and Dorfler schemes appear to outperform the fixed approach, with a more pronounced improvement observable in the penalty method simulations. Finally, Dorfler AMR is slightly more efficient and has a more consistent optimal $\nu$ value of $0.9$ across experiments.

\begin{figure}[h!]
\begin{subfigure}{0.49 \textwidth}
	\includegraphics[scale=0.232, center]{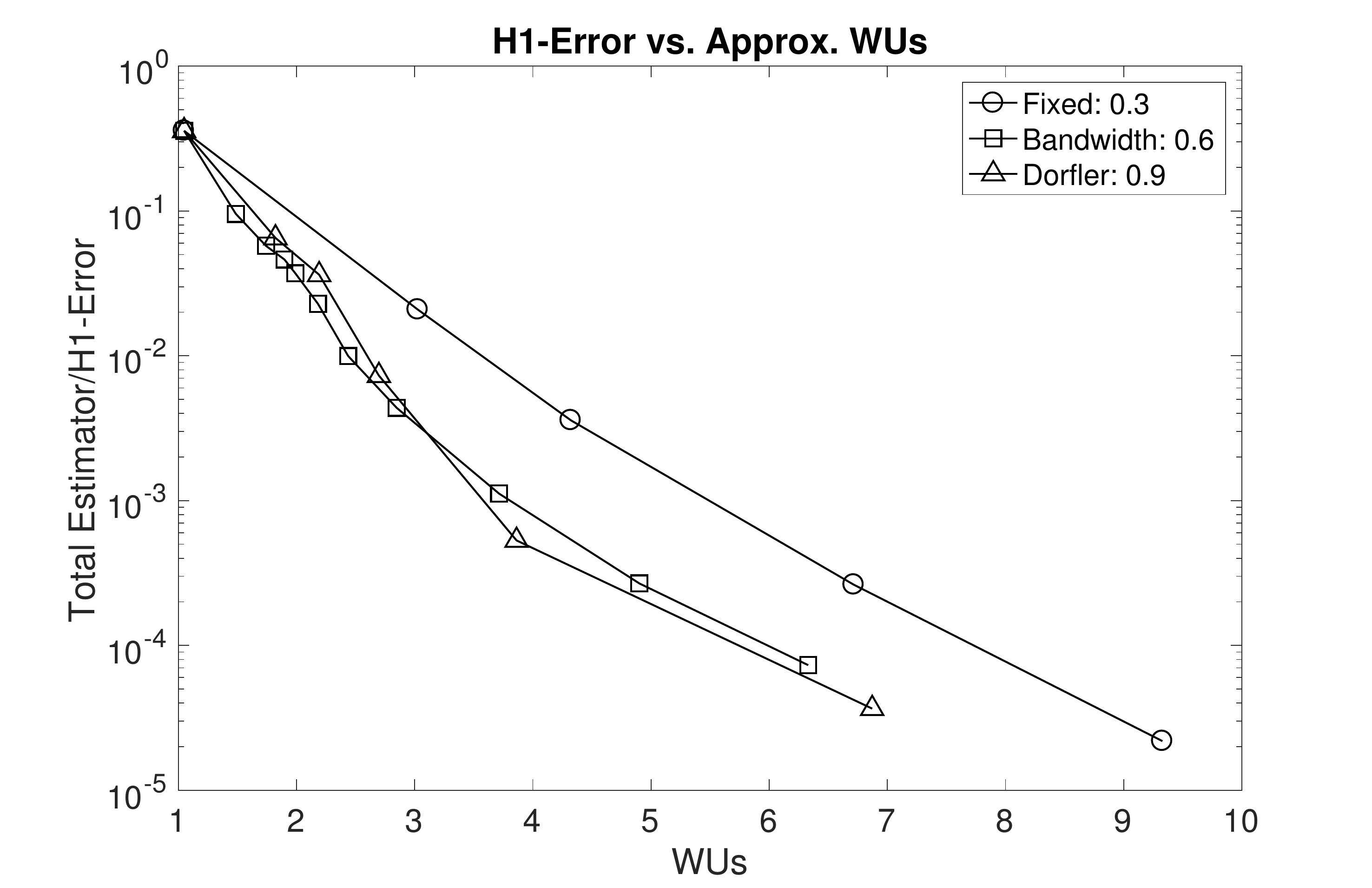}
  \caption{}
  \label{3DErrorAndRefinement:left1}
\end{subfigure}
\begin{subfigure}{0.49 \textwidth}
	\includegraphics[scale=0.232, center]{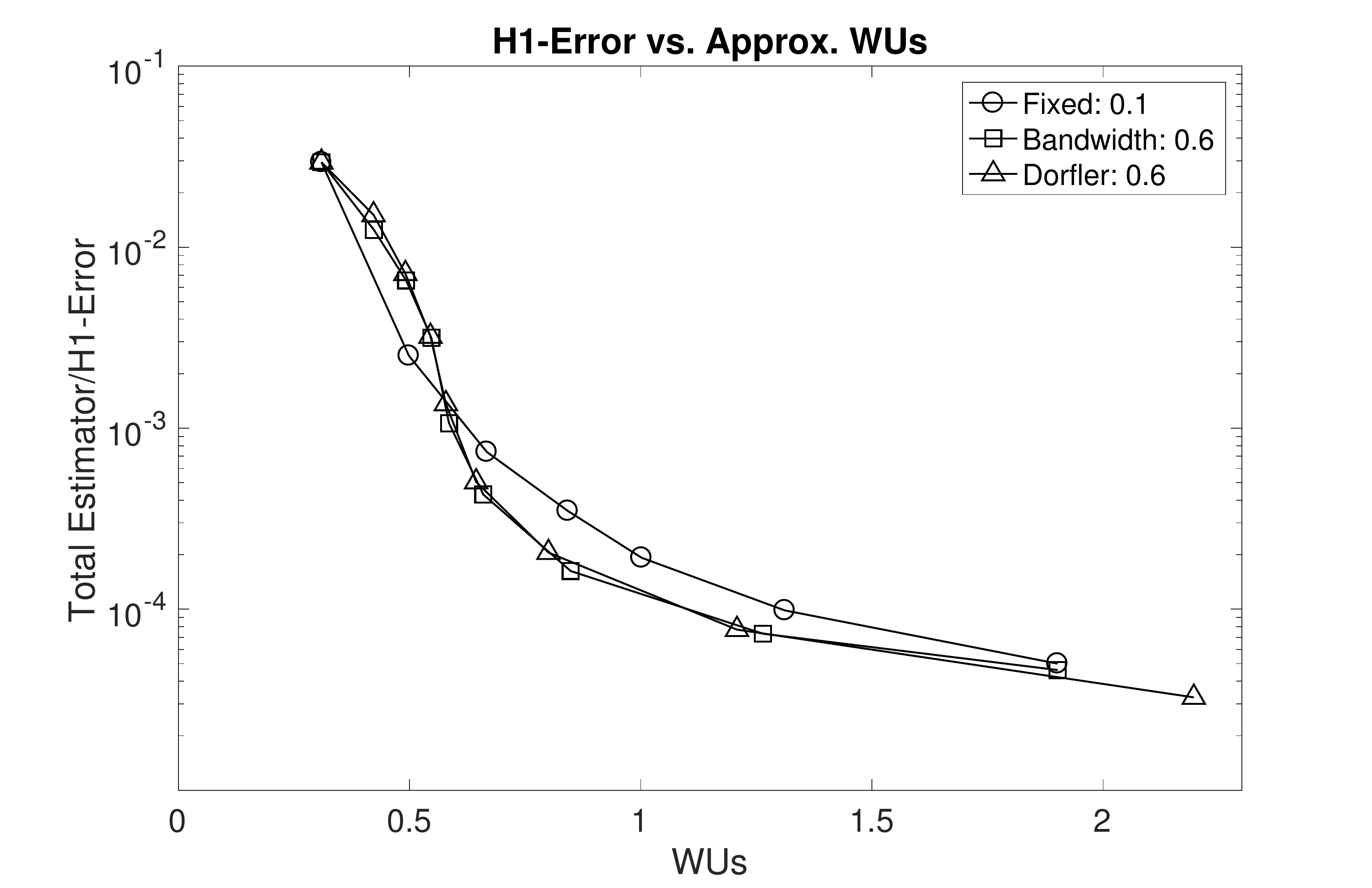}
  \caption{}
  \label{3DErrorAndRefinement:right1}
\end{subfigure}
\caption{\small{
Reduction of $H^1$ approximation error as a function of consumed approximate WUs on each refinement level for the three marking strategies using the penalty (\subref{3DErrorAndRefinement:left1}) and Lagrange multiplier (\subref{3DErrorAndRefinement:right1}) methods.
}}
\label{3DErrorAndRefinement}
\end{figure}

\subsection{2D Flexoelectric Results}

The final set of experiments considers a liquid crystal system with a large applied electric field and flexoelectric coupling on a unit-square domain. The non-dimensionalized physical parameters for $5$CB, a common liquid crystal, are used such that $K_1 = 1$, $K_2 = 0.62903$, $K_3 = 1.32258$, $\epsilon_{\perp} = 7$, and $\epsilon_a = 11.5$. The non-dimensionalized free space permittivity is $\epsilon_0 = 1.42809$, and the flexoelectric constants are $e_s = 1.5$ and $e_b = -1.5$. Finally, the penalty parameter is $\zeta = 10^5$. Each of the simulations begins on a $16 \times 16$ mesh followed by $5$ levels of uniform refinement or $6$ levels of AMR. Uniform boundary conditions are applied for the director field, fixing $\director = (0, 0, 1)^T$. The electric potential is set to zero along the boundary except along $y = 1.0$ where an approximate square function is used such that $\phi$ rises to $1.5$ on roughly the middle-third of the edge. This produces a large electric field with a sharp transition near the top boundary.

\begin{figure}[h!]
\begin{subfigure}{0.32 \textwidth}
	\includegraphics[scale=0.2, center]{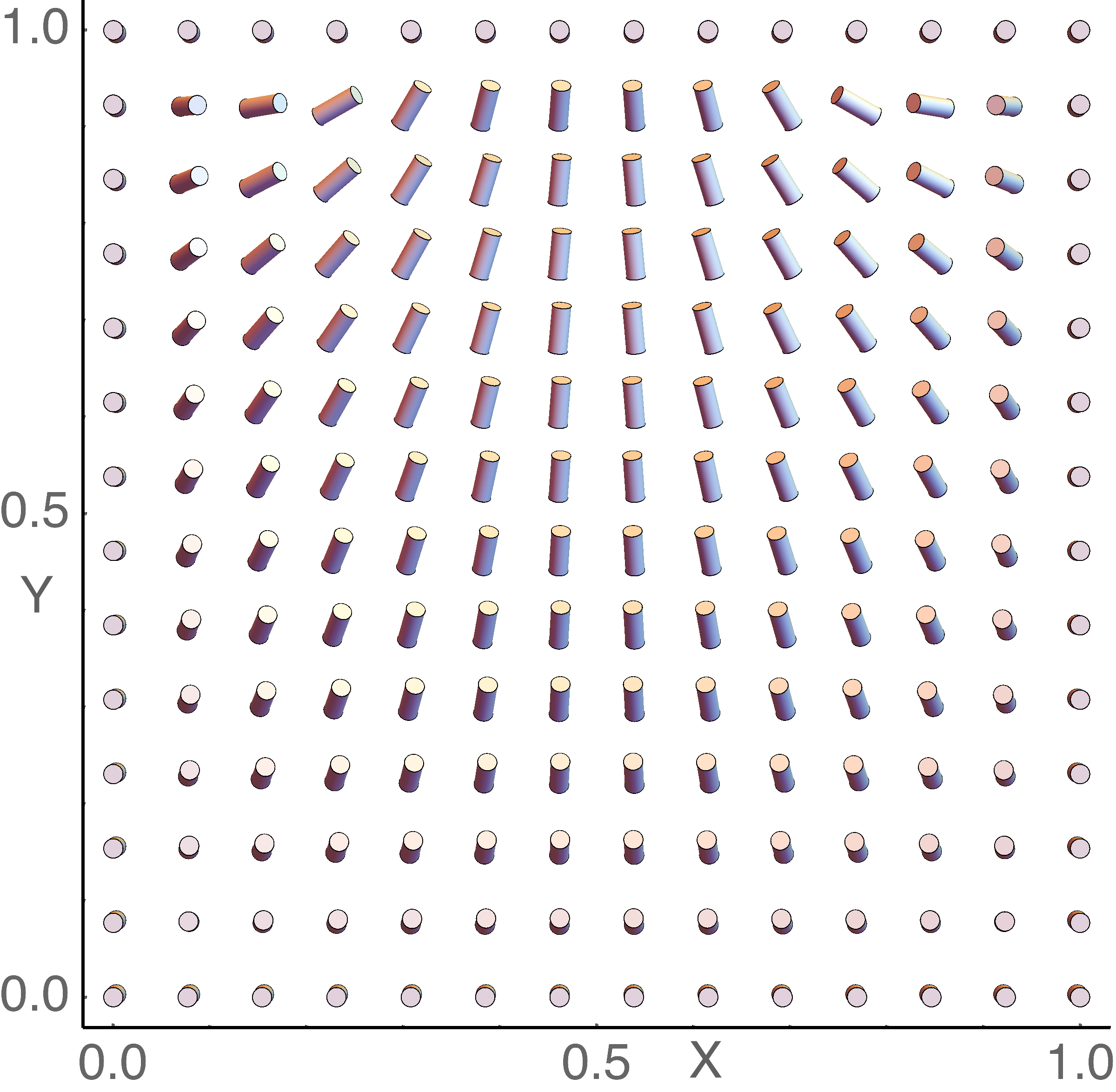} \\ \vspace{-0.0cm}
  \caption{}
  \label{2DFlexoRefinement:left1}
\end{subfigure}
\begin{subfigure}{0.33 \textwidth}
	\includegraphics[scale=0.1315, center]{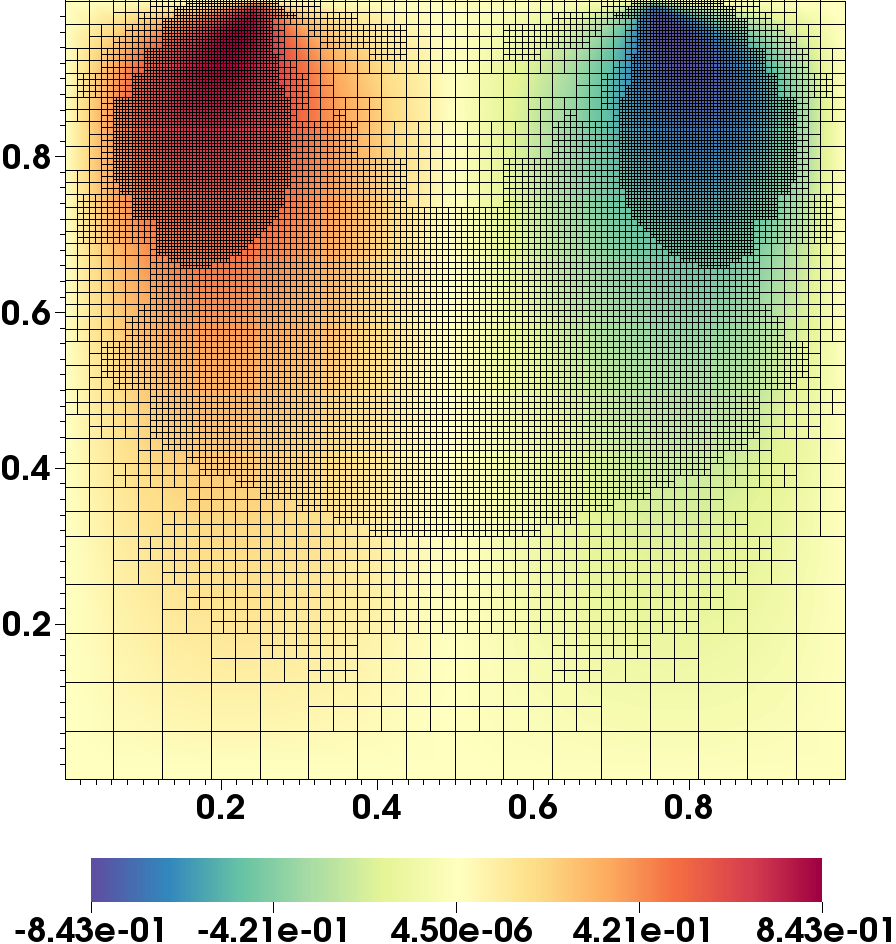}
  \caption{}
  \label{2DFlexoRefinement:center1}
\end{subfigure}
\begin{subfigure}{0.33 \textwidth}
	\includegraphics[scale=0.12, center]{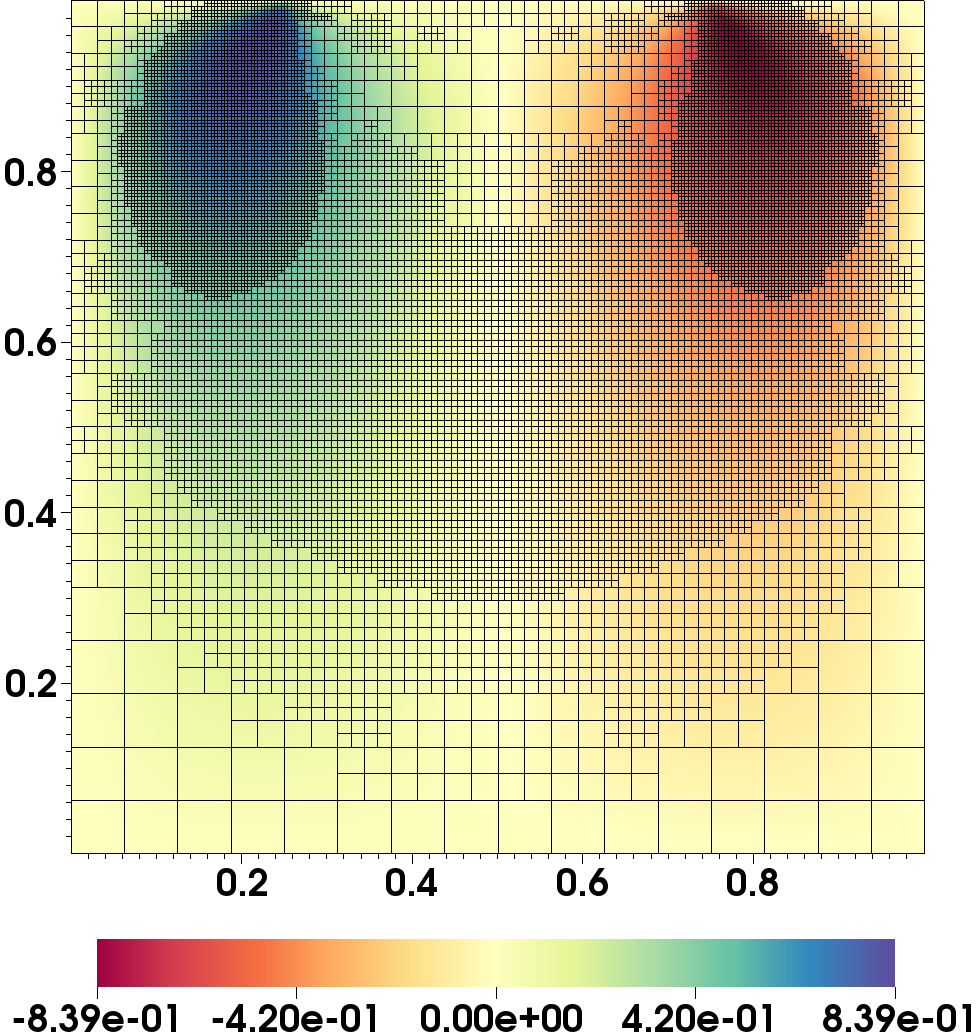}
  \caption{}
  \label{2DFlexoRefinement:right1}
\end{subfigure}
\caption{\small{
(\subref{2DFlexoRefinement:left1})  Fine-mesh computed solution (restricted for visualization) using the penalty method and Dorfler AMR. (\subref{2DFlexoRefinement:center1}, \subref{2DFlexoRefinement:right1}) Resulting mesh patterns after four levels of refinement for the penalty and Lagrange multiplier formulations, respectively, overlaid on the value of $n_1$.
}}
 \label{2DFlexoRefinement}
\end{figure}

The effects of the large electric field are seen in Figure \ref{2DFlexoRefinement}\subref{2DFlexoRefinement:left1}, which shows the computed solution on the finest mesh for the penalty method with Dorfler AMR and $\nu = 0.9$. In response to the field, the director deforms to align with the field lines, even near the boundary where elastic resistance is strongest. The regions surrounding the rapid transitions in the electric potential contain the most difficult to resolve physics and the largest free energy contributions, which suggests that a significant portion of the total approximation error will also be present in these areas. In Figures \ref{2DFlexoRefinement}\subref{2DFlexoRefinement:center1} and \ref{2DFlexoRefinement}\subref{2DFlexoRefinement:right1}, the refinement patterns resulting from Dorfler AMR for the penalty and Lagrange multiplier formulations, respectively, clearly emphasize the transition regions.

\begin{table}[h!]
\centering
\resizebox{\textwidth}{!}{
\begin{tabular}{|c|c|c|c|c|c|c|}
\hlinewd{1.3pt}
 & \multicolumn{3}{|c|}{Penalty (Adapt.)} & \multicolumn{3}{|c|}{Penalty (Uniform)} \\
\hlinewd{1.3pt}
 & Pos. Dev. & Neg. Dev. & Gauss & Pos. Dev. & Neg. Dev. & Gauss\\
 \hline
Fine Conf. & $4.460$e-$02$ & $2.747$e-$02$ & $101.794$ & $4.472$e-$02$ & $2.753$e-$02$ & $357.844$ \\
\hline
Fine Energy & \multicolumn{3}{|c|}{$-39.4726$} & \multicolumn{3}{|c|}{$-39.4858$} \\
\hline
Fine DOF & \multicolumn{3}{|c|}{$2,640,004$} & \multicolumn{3}{|c|}{$4,202,500$} \\
\hline
WUs & \multicolumn{3}{|c|}{$2.427$} & \multicolumn{3}{|c|}{$4.764$} \\
\hline
Timing & \multicolumn{3}{|c|}{$2,700s$} & \multicolumn{3}{|c|}{$7,174s$} \\
\hlinewd{1.3pt}
 & \multicolumn{3}{c|}{Lagrangian (Adapt.)} & \multicolumn{3}{|c|}{Lagrangian (Uniform)} \\
\hlinewd{1.3pt}
 & Pos. Dev. & Neg. Dev. & Gauss & Pos. Dev. & Neg. Dev. & Gauss\\
 \hline
Fine Conf. & $2.316$e-$03$ & $2.470$e-$03$ & $96.395$ & $4.820$e-$04$ & $5.236$e-$04$ & $356.615$ \\
\hline
Fine Energy & \multicolumn{3}{|c|}{$-39.341$} & \multicolumn{3}{|c|}{$-39.355$} \\
\hline
Fine DOF & \multicolumn{3}{|c|}{$2,905,015$} & \multicolumn{3}{|c|}{$4,465,669$} \\
\hline
WUs & \multicolumn{3}{|c|}{$2.382$} & \multicolumn{3}{|c|}{$4.601$} \\
\hline
Timing & \multicolumn{3}{|c|}{$3,999s$} & \multicolumn{3}{|c|}{$9,724s$} \\
\hline
\end{tabular}
}
\caption{\small{Statistics associated with the flexoelectric problem comparing solutions computed with AMR and Dorfler marking to those applying uniform refinement. The first row in each table corresponds to the largest director deviations above and below unit-length at the quadrature nodes and the solutions conformance to Gauss' law on the finest mesh.
}}
\label{flexoAMRUniformStats}
\end{table} 

While no analytical solution exists for this problem, there are a number of indicative metrics that enable comparison of computed numerical solutions. Table \ref{flexoAMRUniformStats} presents these statistics contrasting the quality of approximate solutions produced on uniform meshes with those computed through Dorfler AMR with $\nu = 0.9$, as it performed well in the previous experiments. As expected, the AMR experiments for both constraint enforcement formulations compute solutions in considerably less time and consume half the WUs. In the Lagrange multiplier case, the largest observed deviations of $\director$ from unit-length with AMR remain competitive with those resulting from uniform refinement. For the penalty method, the AMR solution actually exhibits slightly tighter unit-length conformance compared to the finest uniform mesh. Furthermore, the solutions computed with AMR have comparable free energies to those found with uniform refinement. Finally, Table \ref{flexoAMRUniformStats} reports each solution's local Gauss' law conformance over the domain, measured as $\sum_{T \in \triangulation} \int_T (\diverg \vec{D})^2  \diff{V}$. As no special consideration or care has been taken to strongly enforce conformance outside of adherence to the first-order optimality conditions, the sharp boundary conditions of the electric potential lead to relatively large values. However, conformance for solutions constructed with AMR are markedly better, implying more accurate capture of the relevant physics.

\section{Conclusion and Future Work} \label{conclusions}

We have discussed a posteriori error estimators for the electrically and flexoelectrically coupled Frank-Oseen models of nematic liquid crystals with the necessary unit-length constraint enforced via a penalty method or a Lagrange multiplier. The theory developed in \cite{Emerson6} was extended to the proposed coupled estimator for the penalty case showing that it provides a reliable estimate of global approximation error and is an efficient indicator of local error. While analogous theory for the estimator associated with the Lagrangian formulation is the subject of current work, numerical results suggested that it is also highly effective in guiding AMR. The estimators are comprised of readily computable, local quantities suitable for use as part of standard cell flagging schemes.

In addition to the theoretical work, the numerical results of \cite{Emerson6} were expanded to consider several configurations with analytical solutions for purely elastic effects on both 2D and 3D domains. The existence of such solutions enabled verification of the theory in \cite{Emerson6} and concrete comparison of three established marking strategies leveraging the estimators. The most consistent approach for the simulations was Dorfler marking. Finally, the results of using the proposed estimators for a flexoelectrically coupled problem with a challenging applied electric field were presented. In all numerical experiments, application of the error estimators for both constraint enforcement formulations provided accurate cell marking and significantly reduced the amount of work necessary to achieve approximation errors equivalent to or better than those of uniform meshes. Furthermore, AMR guided by the estimators led to more uniformly distributed approximation error, which suggests the constructed meshes are nearer to optimal discretizations. Future work will include extending the theoretical framework to demonstrate reliability and efficiency of the error estimator associated with the Lagrange multiplier formulation. Further, an investigation of nonlinear multigrid methods to directly solve the first-order optimality conditions will be undertaken.

\section*{Acknowledgments}

The author would like to thank Professors James Adler and Xiaozhe Hu for their helpful suggestions and guidance.

\bibliographystyle{siam}

\bibliography{MathematicalCitations}

\end{document}